\newcommand{\E}[1]{{\mathbf E}\left[#1\right]}
\newcommand{\e}{{\mathbf E}}
\newcommand{\p}[1]{{\mathbf P}\left\{#1\right\}}
\newtheorem{thm}{Theorem}
\newtheorem{lem}[thm]{Lemma}
\newtheorem{prop}[thm]{Proposition}
\newtheorem{cor}[thm]{Corollary}
\newtheorem{dfn}[thm]{Definition}
\numberwithin{equation}{section} 
\numberwithin{thm}{section}
\providecommand{\ora}[1]{}
\renewcommand{\ora}[1]{\overrightarrow{#1}}
\newcommand\urladdrx[1]{{\urladdr{\def~{{\tiny$\sim$}}#1}}}
\xdef\oclock{\the\count1:0\the\count255}
\xdef\oclock{\the\count1:\the\count255}\fi
\makenomenclature							\setlength{\nomitemsep}{-\parsep}				
\newcommand\cF{\mathcal F}
\newcommand\cT{{\mathcal T}}
\DeclareRobustCommand{\SkipTocEntry}[5]{}
\numberwithin{figure}{section} 
\begin{document}

\title{Small trees in supercritical random forests} 
\author{Tao Lei}
\address{Department of Mathematics and Statistics, McGill University, 805 Sherbrooke Street West, 
		Montr\'eal, Qu\'ebec, H3A 0B9, Canada}
\email{tao.lei@mail.mcgill.ca}
\date{October 5, 2017} 
\urladdrx{http://www.math.mcgill.ca/~tlei/}

\keywords{Random forests, continuum random trees, degree sequence, GHP convergence.}
\subjclass{Primary: 60C05. Secondary: 05C05,05C80}

\begin{abstract} 
We study the scaling limit of random forest with prescribed degree sequence in the regime that the largest tree consists of all but a vanishing fraction of nodes. We give a description of the limit of the forest consisting of the small trees, by relating plane forest to marked cyclic forest and its corresponding lattice path. 
\end{abstract}

\maketitle


\section{\bf Introduction}\label{sec:intro} 
A plane tree is a finite rooted tree in which the children of each node are ordered. A plane forest is a finite sequence of plane trees. 

Fix a plane tree $T$ and a plane forest $F=(T_1,\ldots,T_c)$. The node set of $F$ is $v(F) = \bigsqcup_{i \le c} v(T_i)$, where $v(T_i)$ is the node set of $T_i$ and where $\bigsqcup$ denotes disjoint union. For a node $v \in v(T)$, by the {\em degree} of $v$ we mean the number of children of $v$ in $T$. We denote this quantity $k_T(v)$.  The degree of $v \in v(F)$, denoted $k_F(v)$, is its degree in its tree, so if $v \in v(T_i)$ then $k_F(v)=k_{T_i}(v)$. For $F$, we let $F^{\downarrow}$ be the sequence of reordering $\{T_1, \cdots, T_c\}$ in decreasing order of number of nodes, breaking ties by the original order of appearance in $F$.

For $i \ge 0$ let $s^i(T) = \#\{v \in v(T): k_T(v)=i\}$ and define $s^i(F)$ accordingly, so $s^i(F)=\sum_{j \le c} s^i(T_j)$. The {\em degree sequences} of $T$ and of $F$ are $s(T) = (s^i(T),i \ge 0)$ and $s(F) = (s^i(F),i \ge 0)$, respectively. Any sequence $s=(s^i, i\ge 0)$ of non-negative integers with $\sum\limits_{i\ge 0} s^i<\infty$ and with $\sum\limits_{i\ge 0} is^i<\sum\limits_{i\ge 0} s^i$ is the degree sequence of some tree or forest. More precisely, writing $c(s):= \sum\limits_{i\ge 0}(1-i)s^i>0$, then any forest with degree sequence $s$ consists of exactly $c(s)$ trees.

The goal of this paper is to study the asymptotic structure of large random forests with a given degree sequence, in the ``supercritical'', finite variance regime. In this setting, the forest typically consists of a single, large tree containing all but a vanishing fraction of the nodes. The scaling limit of this tree is $\cT$, the Brownian Continuum Random Tree (CRT) introduced by Aldous in \cite{AldousI, AldousII, AldousIII}. The remaining nodes form another random forest, which may be expected to have its own scaling limit (with an appropriate rescaling, which should be different from that of the large tree). The contributions of this paper confirm that the above picture is correct, and yield a pleasingly straightforward description, which we now provide, for the joint scaling limit of the large tree and the small trees. 

Let $B=(B(t),t \ge 0)$ be a linear Brownian motion. 
For $t \ge 0$ let $R(t) = B(t) - \inf(B(s),s \le t)$; the process $R=(R(t),t \ge 0)$ is Brownian motion reflected at its running minimum. 
Let $Z = \{t \ge 0: R(t) = 0\}$ be the zero set of $R$.  By definition, this is also the set of times at which $B$ is equal to its running minimum.

Now let $\tau(x) = \inf(t: B(t) \le -x)$ for $x \ge 0$, and let $Z(x) = Z \cap [0,\tau(x)]$. For $\sigma>0$, the relative complement $[0,\tau(\frac{1}{\sigma})]\setminus Z(\frac{1}{\sigma})$ is almost surely a countable collection of intervals with distinct lengths, and with total length $\tau(\frac{1}{\sigma})$. List these intervals in decreasing order of length as $((g_i,d_i),i \ge 1)$. 

For $i \ge 1$ let $\cT_i$ be the continuum random tree coded by $2B_i$, where
\[
B_i = (B(g_i+t)-B(g_i),0 \le t \le d_i-g_i) = (R(g_i+t)-R(g_i),0 \le t \le d_i-g_i)\, ;
\]
this construction is described in more detail and generality in Section \ref{sec:concepts}. Then the scaling limit of the small trees has the law of the sequence $\cF = (\cT^\downarrow_i,i \ge 1)$, which is a decreasing reordering of $(\cT_i, i\ge 1)$ according to $(d_i-g_i, i\ge 1)$. 

For any probability distribution $q=(q^{(i)}, i\ge 0)$ on $\mathbb{N}$, we let $\sigma^2(q)=\sum\limits_{i\ge 0}i^2 q^{(i)}$.
\begin{thm}\label{thm:main} 
Fix a sequence $p=(p^i,i \ge 0)$ with $\sum_{i \ge 0} p^i=1=\sum_{i \ge 0} i p^i$ and with $\sigma^2:=\sigma^2(p) \in (0,\infty)$. 
For each $n \ge 1$ let $s_n = (s_n^i,i \ge 0)$ be a degree sequence with $\sum_{i \ge 0} s_n^i = n$ and write $p_n=(p^i_n,i\ge 0)=(s^i_n/n, i\ge 0)$ and $c_n=c(s_n)$. 

Let $F_n$ be a uniformly random plane forest with degree sequence $s_n$. 
Let $\hat{F}_n=(T_{n,i}^{\downarrow}, 2\le i \le c_n)$ be the decreasing reordering of $F_n$, excluding the largest tree $T^\downarrow_{n,1}$. Suppose that $p_n \to p$ in $L^2$ and $c_n=o(n^{1/2})$, then 
\[
\left(\frac{\sigma(p_n)T^\downarrow_{n,1}}{n^{1/2}}, \frac{\sigma(p_n)\hat{F}_n}{c_n}, \frac{n-|T^\downarrow_{n,1}|}{c^2_n}\right)\overset{d}{\rightarrow}\left(\cT, \cF, \tau\left(\frac{1}{\sigma}\right)\right)
\]
where the first coordinate of the joint convergence is in the GHP sense, the second coordinate is in the sense of coordinatewise GHP convergence, and $\cT$ and $\cF$ are independent.
\end{thm}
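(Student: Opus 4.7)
The plan is to encode $F_n$ by its depth-first Lukasiewicz walk $W_n=(W_n(k),0\le k\le n)$, so $W_n(0)=0$, $W_n(n)=-c_n$, and the $c_n$ excursions of $W_n$ above its running minimum are in bijection with the trees of $F_n$. By the cycle lemma, $W_n$ has the same law as a uniform rearrangement $\tilde W_n$ of the prescribed step multiset (a bridge from $0$ to $-c_n$) followed by a uniform cyclic shift among its $c_n$ valid starts. Under $p_n\to p$ in $L^2$ and $c_n=o(n^{1/2})$, the classical bridge invariance principle gives
\[
\Bigl(\tfrac{1}{\sigma(p_n)\,n^{1/2}}\,\tilde W_n(\lfloor nt\rfloor)\Bigr)_{t\in[0,1]} \;\xrightarrow{d}\; (\beta(t))_{t\in[0,1]}
\]
for $\beta$ a Brownian bridge from $0$ to $0$.

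For the large tree, one first shows $L_n:=n-|T^\downarrow_{n,1}|=O_{\mathbf{P}}(c_n^2)=o_{\mathbf{P}}(n)$, so the dominant excursion accounts for $n(1-o_{\mathbf{P}}(1))$ nodes; an Aldous-type argument in the style of Broutin--Marckert / Kortchemski for prescribed degree sequences, combined with Vervaat's re-rooting of $\beta$ at its a.s.\ unique minimum, then yields $\sigma(p_n)T^\downarrow_{n,1}/n^{1/2}\to \cT$ in GHP. For the small trees, conditioning on $(|T^\downarrow_{n,1}|,s(T^\downarrow_{n,1}))$, the remainder $\hat F_n$ is a uniform plane forest with degree sequence $s_n':=s_n-s(T^\downarrow_{n,1})$ and $c_n-1$ trees on $L_n$ nodes. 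Using the concentration of $s_n/n$ and the smallness of $L_n/n$, one verifies $s_n'/L_n\to p$ in $L^2$ and $L_n/c_n^2\to \tau(1/\sigma)$; this places $\hat F_n$ in the critical regime $(c_n-1)\asymp L_n^{1/2}$. Rescaling the Lukasiewicz walk of $\hat F_n$ by $1/c_n^2$ in time and $1/(\sigma(p_n) c_n)$ in space, an invariance principle for bridges with a critical number of excursions yields convergence to a Brownian motion run until its first passage at $-1/\sigma$, which gives $\sigma(p_n)\hat F_n/c_n \to \cF$ coordinatewise in GHP by continuity of the excursion decomposition.

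The main obstacle is the joint statement together with the independence of $\cT$ and $\cF$. The picture is that the macroscopic scale of $\tilde W_n$ (scale $n^{1/2}$) encodes the large tree while the mesoscopic fluctuations near its minimum (scale $c_n$) encode the small trees, and these two contributions should become asymptotically independent. Two natural strategies present themselves: (a) a two-scale coupling along the cyclic construction, in which, after re-rooting the bridge at its Vervaat point, a local neighborhood of length $\tau(1/\sigma)c_n^2$ is shown to be independent in the limit of the normalized excursion; or (b) the marked cyclic forest / lattice path framework signaled in the abstract, which should package this decoupling bijectively. Either route must still apply a critical-regime invariance principle to $\hat F_n$ with a random degree sequence $s_n'$, which requires a version uniform in $s_n'$ over a neighborhood of its typical value together with a concentration estimate for $s_n'$.
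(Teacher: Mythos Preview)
Your sketch is broadly on the right track and correctly identifies option~(b), the marked cyclic forest bijection, as the relevant combinatorial tool; that is exactly what the paper uses, but the way it is deployed differs from your outline in two respects.

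First, rather than analysing the Lukasiewicz bridge at scale $n^{1/2}$ and then performing a separate mesoscopic analysis near its minimum, the paper applies the marked cyclic forest bijection from the start: marking a uniform node of $F_n$ and cyclically shifting so the marked tree is last yields a coding walk $S_n$ whose increments are a \emph{uniform permutation} of $d(s_n)-1$, with no bridge conditioning. At scale $c_n$ this converges directly to $(\sigma B(t),t\ge0)$ (Theorem~\ref{thm:walk convergence}); the marked tree is $T^\downarrow_{n,1}$ with high probability, and the small trees are the first $c_n-1$ excursions of $S_n$ above its running minimum. This single invariance principle gives both $L_n/c_n^2\to\tau(1/\sigma)$ and the small-tree sizes, replacing your two-scale coupling.

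Second, the paper does not derive GHP convergence of the small trees from the walk, nor does it reduce $\hat F_n$ to the critical regime of \cite{Lei2017+}. Instead it uses the elementary observation that, conditionally on their individual degree sequences, the trees $(T^\downarrow_{n,l})_{l\ge1}$ are independent and each is uniform with its own degree sequence. Martingale concentration shows $p_{n,l}\to p$ and $\sigma^2(p_{n,l})\to\sigma^2$ for each fixed $l$; Broutin--Marckert then applies to each $T^\downarrow_{n,l}$ \emph{separately}, producing independent CRTs. The independence of $\cT$ and $\cF$ is thus automatic from this conditional independence, with no decoupling argument required.

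There is one genuine gap in your outline: ``continuity of the excursion decomposition'' on the Lukasiewicz walk of $\hat F_n$ yields the excursion \emph{lengths}, not GHP convergence of the coded trees. In the fixed-degree-sequence setting there is no off-the-shelf statement that convergence of $W_n$ implies GHP convergence of the trees; that is precisely the content of Broutin--Marckert, which has to be applied tree by tree in any case. Once you accept this, the conditional-independence shortcut above becomes available and the harder parts of your programme (uniform-in-$s_n'$ invariance principle, two-scale decoupling) are unnecessary.
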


\noindent {\bf Remarks.} \\
$\bullet$ The condition that $\sum_{i \ge 0} s_n^i = n$ in Theorem~\ref{thm:main} is for notational convenience; all proofs carry through with only cosmetic changes provided that $|s_n| = \sum_{i \ge 0} s_n^i \to \infty$ as $n \to \infty$, that $|s_n|^{-1}\cdot s_n \to p$ in $L^2$ and that $c_n = o(|s_n|^{1/2})$. \\
$\bullet$ Fix a critical, finite variance offspring distribution $\nu$, and let $\mathcal{F}_n$ be a forest of $c_n$ independent Galton-Watson$(\nu)$ trees with offspring distribution $\nu$, conditioned to have total progeny $n$. It is not hard to check, as in \cite{BroutinMarckert2014}, that with high probability the degree sequence of $\mathcal{F}_n$ satisfies the conditions of Theorem~\ref{thm:main}, so the distributional convergence of the theorem also applies to $\mathcal{F}_n$. The convergence of the third coordinate, in the Galton-Watson setting, appears as Theorem~2.1.5 of \cite{Pavlov2000}, and provides a new proof and different perspective on that result; the convergence of the second coordinate strengthens and generalizes and removes a moment assumption from Theorem~1.7 of \cite{Cheplyukova1998}. 

\medskip 

The field of scaling limit of large random structures is motivated by the seminal papers \cite{AldousI, AldousII, AldousIII} by Aldous, where he introduced the concept of Brownian Continuum Random Tree and showed that critical Galton-Watson tree conditioned on its size has CRT as limiting object. To be more specific, our work here is a natural generalization of \cite{BroutinMarckert2014} where it is shown that under natural hypotheses on the degree sequences, after suitable normalization, uniformly random trees with given degree sequence converge to CRT as sizes of trees tend to infinity. Let $n$ be the number of nodes of the forest. In this paper we work on uniformly random forests where the number of trees is $o(n^{1/2})$; a previous paper \cite{Lei2017+} addressed the asymptotic behaviour of such forests in the regime where the number of trees is of order $n^{1/2}$. 

\subsection*{Outline of the section}
In the remainder of this section, we first briefly introduce the concepts required to understand the statement of Theorem \ref{thm:main} rigorously. In Section~\ref{sec:intro_key_ingredients} we describe the key ingredients of the proof of our main theorem. In Section \ref{sec:proof of main theorem} we explain how  to deduce Theorem \ref{thm:main} from the results of Section~\ref{sec:intro_key_ingredients}, and outline the remaining sections of the article.

\subsection{Concepts}\label{sec:concepts}
\subsection*{Real trees}
We briefly recall the concepts of real trees and real trees coded by continuous functions, which are necessary for understanding the construction of $\cF$. A more lengthy presentation about the probabilistic aspects of real trees can be found in \cite{Evans2008, LeGall2005}.

\begin{dfn}
A compact metric space $(T, d)$ is a {\em real tree} if the following hold for every $a, b\in T$:

(i) There is a unique isometric map $f_{a,b}:[0, d(a,b)]\to T$ such that $f_{a,b}(0)=a$ and $f_{a,b}(d(a,b))=b$.

(ii) If $q$ is a continuous injective map from $[0,1]$ into $T$, such that $q(0)=a$ and $q(1)=b$, we have $q([0,1])=f_{a,b}([0,d(a,b)])$.
\end{dfn}

Now we show a way of constructing real trees from continuous functions. Let $g:[0,\infty)\rightarrow [0,\infty)$ be a continuous function with compact support and such that $g(0)=0$. For every $s, t\ge 0$, let $$d^\circ_g(s,t)=g(s)+g(t)-2m_g(s,t)$$ where $$m_g(s,t)=\min\limits_{s\wedge t\le r\le s\vee t} g(r).$$ The function $d^\circ_g$ is a pseudometric on $[0,\infty)$. Define an equivalence relation $\sim$ on $[0,\infty)$ by setting $s\sim t$ iff $d^\circ_g(s,t)=0$. Then let $T_g=[0,\infty)/\sim$ and let $d_g$ be the induced distance on $T_g$. Then $(T_g, d_g)$ is a real tree (see, e.g. Theorem 2.2 in \cite{LeGall2005}).


\subsection*{GHP convergence} 
Let $(X, d)$ and $(X', d')$ be compact metric spaces. Then the {\em Gromov-Hausdorff distance} between $(X, d)$ and $(X', d')$ is given by $$d_{GH}((X,d),(X',d'))=\inf\limits_{\phi,\phi', Z}d_H^Z(\phi(X),\phi'(X')),$$ where the infimum is taken over all isometric embeddings $\phi:X\hookrightarrow Z$ and $\phi': X'\hookrightarrow Z$ into some common Polish metric space $(Z, d^Z)$ and $d_H^Z$ denotes the Hausdorff distance between compact subsets of $Z$, that is, $$d_H^Z(A,B)=\inf\{\epsilon>0: A\subset B^\epsilon, B\subset A^\epsilon\},$$ where $A^\epsilon$ is the {\em $\epsilon-$enlargement} of $A$: $$A^\epsilon=\{z\in Z: \inf\limits_{y\in A} d^Z(y, z)<\epsilon\}.$$

Note that strictly speaking $d_{GH}$ is not a distance since different compact metric spaces can have GH distance zero.

A {\em measured metric space} $\mathcal{X}=(X, d, \mu)$ is a metric space $(X, d)$ with a finite Borel measure $\mu$. Let $\mathcal{X}=(X, d, \mu)$ and 
${\mathcal{X}}'=(X', d',\mu')$ be two compact measured metric spaces, they are {\em GHP-isometric} if there exists an isometric one-to-one map $\Phi: X\rightarrow X'$ such that $\Phi_{\ast}\mu=\mu'$ where $\Phi_{\ast}\mu$ is the {\em push forward} of measure $\mu$ to $(X', d')$, that is, $\Phi_{\ast}\mu(A)=\mu(\Phi^{-1}(A))$ for $A\in\mathcal{B}(X')$. In this case, call $\Phi$ a {\em GHP-isometry}. Suppose both $\mathcal{X}$ and $\mathcal{X'}$ are compact, then define the Gromov-Hausdorff-Prokhorov distance as:
$$d_{GHP}(\mathcal{X},\mathcal{X'})=\inf\limits_{\Phi, \Phi', Z}(d_H^Z(\Phi(X), \Phi'(X'))+d_P^Z(\Phi_\ast\mu, \Phi'_\ast\mu'))$$ where the infimum is taken over all GHP-isometric embeddings $\Phi: X\hookrightarrow Z$ and $\Phi': X'\hookrightarrow Z$ into some common Polish metric space $(Z, d^Z)$, and $d_P^Z$ denotes the Prokhorov distance between finite Borel measures on $Z$, that is, \[d_P^Z(\mu, \nu)=\inf\{\epsilon>0: \mu(A)\le\nu(A^\epsilon)+\epsilon, \nu(A)\le \mu(A^\epsilon)+\epsilon \mbox{ for any closed set }A\}.\]
Let $\mathbb{K}$ denote the set of GHP-isometry classes of compact measured metric spaces and we identify $\mathcal{X}$ with its GHP-isometry class. 
\begin{thm}[Theorem 2.5 in \cite{AbrahamDH2013}]
The function $d_{GHP}$ defines a metric on $\mathbb{K}$ and the space $(\mathbb{K}, d_{GHP})$ is a Polish metric space.
\end{thm}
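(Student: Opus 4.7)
The plan is to establish the metric axioms for $d_{GHP}$ first, then separability and completeness of $(\mathbb{K}, d_{GHP})$, adapting the standard strategy used for the Gromov--Hausdorff distance (as in Burago--Burago--Ivanov) to the measured setting. Symmetry is immediate from the definition, and $d_{GHP}(\mathcal{X},\mathcal{X})=0$ follows by taking $Z = X$ with both embeddings equal to the identity.

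For the triangle inequality, given three spaces $\mathcal{X}, \mathcal{X}', \mathcal{X}''$ and $\epsilon > 0$, I would pick isometric embeddings $\phi_1: X \hookrightarrow Z_1,\ \phi_1': X' \hookrightarrow Z_1$ achieving $d_{GHP}(\mathcal{X},\mathcal{X}') + \epsilon$, and $\phi_2': X' \hookrightarrow Z_2,\ \phi_2'': X'' \hookrightarrow Z_2$ achieving $d_{GHP}(\mathcal{X}',\mathcal{X}'') + \epsilon$. Gluing $Z_1$ and $Z_2$ along the two isometric copies of $X'$ (the quotient of $Z_1 \sqcup Z_2$ identifying $\phi_1'(x)$ with $\phi_2'(x)$ for all $x \in X'$, equipped with the natural quotient metric) yields a Polish space $Z$ into which $X, X', X''$ all embed, and the triangle inequalities for Hausdorff and Prokhorov distances on $Z$ give the result after letting $\epsilon \to 0$. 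Positive-definiteness is the subtlest axiom: if $d_{GHP}(\mathcal{X},\mathcal{X}') = 0$, then for each $n$ there are embeddings $\phi_n,\phi_n'$ into $Z_n$ with Hausdorff and Prokhorov defect at most $1/n$; from these I would extract $(1/n)$-correspondences between $X$ and $X'$ that nearly preserve distances and, via the Prokhorov bound, nearly push $\mu$ to $\mu'$. Applying an Arzel\`a--Ascoli argument on a countable dense subset of $X$ combined with a diagonal extraction then produces a genuine isometry $\Phi: X \to X'$, and weak convergence of the pushforward measures along the extracted subsequence upgrades this to $\Phi_\ast \mu = \mu'$.

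For separability I would approximate each compact $\mathcal{X} = (X,d,\mu)$ by choosing a finite $\epsilon$-net $\{x_1,\ldots,x_N\}$ with Voronoi cells $V_i$, replacing $X$ by $\{x_1,\ldots,x_N\}$ with the restricted metric and with atomic measure $\sum_i \mu(V_i)\,\delta_{x_i}$, then further perturbing distances and weights to rational values; the countable collection of finite rational-weighted metric spaces with rational inter-point distances is then dense in $\mathbb{K}$. For completeness, given a Cauchy sequence $(\mathcal{X}_n)$, pass to a subsequence with $d_{GHP}(\mathcal{X}_n, \mathcal{X}_{n+1}) < 2^{-n}$, pick near-optimal gluings $Z_n$ witnessing each pair, and inductively concatenate them to produce a single Polish space $Z^*$ (the completion of $\bigsqcup_n Z_n$ with $Z_n$ and $Z_{n+1}$ glued along their common copy of $X_{n+1}$) into which every $\mathcal{X}_n$ isometrically embeds, with images $(K_n,\nu_n)$. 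The sequence $(K_n,\nu_n)$ is then Cauchy in Hausdorff--Prokhorov distance inside $Z^*$; using that Cauchy in $d_H$ implies uniform total boundedness I would extract a compact Hausdorff limit $K^* \subset Z^*$, and using completeness of the Prokhorov distance on finite Borel measures on a Polish space I would extract a finite Borel limit $\nu^*$ supported on $K^*$. Then $\mathcal{X}^* = (K^*, d_{Z^*}|_{K^*}, \nu^*)$ serves as the required GHP-limit.

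The main obstacle will be the positive-definiteness step: passing from a sequence of near-isometric embeddings into \emph{varying} ambient spaces to a single genuine GHP-isometry requires carefully selecting a countable dense subset of $X$, invoking an Arzel\`a--Ascoli-type compactness argument on the induced correspondences, and then checking separately that the pushforwards of $\mu$ under these approximations converge weakly (in the sense of Prokhorov on the ambient spaces) to $\mu'$ along the extracted subsequence. The inductive-gluing construction underpinning both the triangle inequality and completeness is technical but routine, reducing everything to standard properties of the Hausdorff and Prokhorov distances on a single Polish ambient space.
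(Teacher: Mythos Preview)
The paper does not give its own proof of this statement: it is quoted verbatim as Theorem~2.5 of \cite{AbrahamDH2013} and used as a black box, so there is nothing in the paper to compare your argument against. Your outline is essentially the standard route taken in that reference (and in related treatments by Miermont, Evans--Winter, etc.): verify the metric axioms via the gluing/quotient-metric construction, handle positive-definiteness by extracting a limiting isometry from near-optimal correspondences, prove separability through finite rational $\epsilon$-nets with atomic measures, and prove completeness by concatenating gluings into a single ambient Polish space where Hausdorff and Prokhorov completeness can be invoked. There is no gap in the strategy; the only caveat is that in the completeness step you should check that the limiting measure $\nu^*$ is actually supported on $K^*$ (this follows from the Portmanteau theorem and the Hausdorff convergence $K_n \to K^*$, but deserves a sentence).
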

We next define coordinatewise GHP convergence of sequences of measured metric spaces. For $\mathbf{X}_n=(\mathcal{X}_{n,j}, j\ge 1), \mathbf{X}=(\mathcal{X}_j, j\ge 1)$ in $\mathbb{K}^{\mathbb{N}}$, we say that $\mathbf{X}_n$ converges to $\mathbf{X}$ in {\em coordinatewise GHP sense} if for any $j\in\mathbb{N}$, \[\sup\limits_{1\le l\le j} d_{GHP}(\mathcal{X}_{n,l}, \mathcal{X}_l)\to 0 \mbox{ as } n\to\infty.\]

Now to understand the statement of Theorem \ref{thm:main} in the rigorous way, we are viewing $T^\downarrow_{n,1}$ and each tree components of $\hat{F}_n$ as measured metric space where the distance is rescaled graph distance and the measure is the uniform measure putting mass $1/n$ on each node of $T^\downarrow_{n,1}$ and $\hat{F}_n$.

\subsection{Functional convergence and proof of Theorem \ref{thm:main}}\label{sec:intro_key_ingredients}

Given a degree sequence $s=(s^i, i\ge 0)$ with $|s|=n$, we let $d(s)\in\mathbb{Z}^n_{\ge 0}$ be the vector whose entries are weakly increasing and with $s^i$ entries equal to $i$, for each $i\ge 0$. For example, if $s=(3, 2, 0, 1, 0,0, \cdots)$ with $s^i=0$ for $i\ge 4$, then $d(s)=(0, 0, 0, 1, 1, 3)$. Suppose we have a sequence of degree sequences $(s_n)_{n\in\mathbb{N}}$, with $s_n=(s^i_n, i\ge 0), |s_n|=n, c_n:=c(s_n)=o(n^{1/2})$ and $n^{-1}\cdot s_n\to p$ in $L^2$ for some distribution $p=(p^i, i\ge 0)$ with mean 1 and finite variance $\sigma^2$ on $\mathbb{N}$. Let $C_{n,1}, \cdots, C_{n,n}$ be a uniformly random permutation of $d(s_n)$. For $1\le k\le n$, let $X_{n,k}=C_{n,k}-1$, and set $S_{n,k}=\sum_{j=1}^k X_{n,j}$. Our proof makes use of the following functional convergence theorem. 
\begin{thm}\label{thm:walk convergence}
We have the following convergence of processes: 
\begin{equation}\label{eqn:walk convergence}
\left(\frac{1}{c_n}S_{n, \lfloor tc^2_n \rfloor}, t\ge 0 \right)\overset{d}{\to}\left(\sigma B(t), t\ge 0\right)
\end{equation}
where $(B(t), t\ge 0)$ is standard Brownian Motion.
\end{thm}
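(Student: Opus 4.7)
The plan is to express $S_n$ as a martingale plus a negligible drift and apply the martingale functional CLT. Since $\sum_{k=1}^n X_{n,k} = \sum_i(i-1)s_n^i = -c_n$, the conditional expectation of $X_{n,k}$ given $\mathcal{F}_{n,k-1} := \sigma(X_{n,1},\ldots,X_{n,k-1})$ equals $(-c_n - S_{n,k-1})/(n-k+1)$. Set $Y_{n,k} := X_{n,k} - \mathbf{E}[X_{n,k} \mid \mathcal{F}_{n,k-1}]$ and $M_{n,k} := \sum_{j=1}^k Y_{n,j}$. A crude second-moment bound together with Doob's maximal inequality gives $\max_{k \le Tc_n^2}|S_{n,k}| = O_{\mathbf{P}}(c_n)$, so the cumulative drift $S_{n,k} - M_{n,k}$ up to time $\lfloor Tc_n^2\rfloor$ is $O_{\mathbf{P}}(c_n^3/n) = o_{\mathbf{P}}(c_n)$ by the hypothesis $c_n=o(n^{1/2})$. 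It is therefore enough to prove the convergence for $M_n$ in place of $S_n$.

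To apply the martingale FCLT I would verify convergence of the predictable quadratic variation and a Lindeberg condition. The conditional variance $\mathrm{Var}(X_{n,k} \mid \mathcal{F}_{n,k-1})$ is the empirical variance of the $n-k+1$ unsampled values; the population second moment $\tfrac{1}{n}\sum_{j=1}^n X_{n,j}^2 = \tfrac{1}{n}\sum_i (i-1)^2 s_n^i$ converges to $\sum_i(i-1)^2 p^i = \sigma^2$ by the $L^2$ convergence $p_n\to p$ (combined with $\sum_i i p_n^i \to 1$). Removing at most $Tc_n^2 = o(n)$ values perturbs the empirical second moment by $O(c_n^2 \max_j X_{n,j}^2/n)$, which is $o(1)$ because the $L^2$ hypothesis forces $\max_j X_{n,j}^2 = o(n)$. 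Hence $c_n^{-2}\langle M_n\rangle_{\lfloor tc_n^2\rfloor} \to \sigma^2 t$ in probability for each $t$. For Lindeberg, use $|Y_{n,k}| \le |X_{n,k}| + O(1)$ and the fact that the expectation of a symmetric statistic of a uniform sample without replacement equals its population average, so that
\[
\sum_{k \le \lfloor tc_n^2\rfloor} \mathbf{E}\!\left[\frac{Y_{n,k}^2}{c_n^2}\,\mathbf{1}_{|Y_{n,k}|>\epsilon c_n}\right] \le t\cdot \frac{1}{n}\sum_{j=1}^n X_{n,j}^2\, \mathbf{1}_{|X_{n,j}|>\epsilon c_n/2} + o(1),
\]
which vanishes since $c_n\to\infty$ and the tails $\sum_{i>\epsilon c_n/2}i^2 p_n^i$ vanish by $L^2$ convergence of $p_n$. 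Invoking, for example, Theorem VIII.3.11 of Jacod--Shiryaev then delivers the claimed convergence to $\sigma B$.

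I expect the main obstacle to be the uniform control of the conditional variance: because the sampled population shrinks at every step and the already-drawn values are removed without any a priori exchangeability with the remaining ones, one must argue that the shrinkage has only $o(1)$ effect on the running empirical second moment throughout $k\le Tc_n^2$. The two hypotheses do exactly what is needed here: $c_n=o(n^{1/2})$ forces the sampled fraction $Tc_n^2/n$ to be negligible, while $L^2$ convergence of $p_n$ both fixes the limiting variance and rules out a single atypically large entry of the degree sequence from dominating the sum of squares.
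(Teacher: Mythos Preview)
Your strategy---Doob-decompose $S_n$, show the compensator is negligible, and apply a martingale functional CLT to the martingale part---is a legitimate and genuinely different route from the paper's. The paper instead truncates the increments at a level $t_n$ with $1\ll t_n\ll n^{1/2}/c_n$: for the bounded part it uses the Diaconis--Freedman total-variation bound to replace sampling without replacement by an i.i.d.\ sequence over the relevant time window and then invokes Donsker, while the large-jump remainder is shown to be $o_{\mathbf P}(c_n)$ by a separate martingale argument combined with a dilation (without-replacement versus with-replacement) comparison. Your approach is more streamlined (no truncation, a single limit theorem) at the cost of invoking the martingale FCLT rather than the classical Donsker theorem; the paper keeps the limit theorem elementary but pays with an extra decomposition.

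There is, however, a genuine slip at exactly the spot you flagged. The deterministic bound $O(c_n^2\max_j X_{n,j}^2/n)$ for the perturbation of the empirical second moment is \emph{not} $o(1)$ under the stated hypotheses: from $L^2$ convergence of $p_n$ you obtain only $\max_j X_{n,j}^2=o(n)$, whereas your claim requires $o(n/c_n^2)$. For instance $c_n\asymp n^{1/4}$ together with a single degree of order $n^{1/3}$ is compatible with all assumptions and makes $c_n^2\max_j X_{n,j}^2/n\asymp n^{1/6}\to\infty$. The repair is to argue in probability rather than in the worst case: by exchangeability,
\[
\E{\sum_{l\le \lfloor Tc_n^2\rfloor}X_{n,l}^2}=\lfloor Tc_n^2\rfloor\cdot \frac{1}{n}\sum_{j=1}^n X_{n,j}^2=O(c_n^2),
\]
so with high probability the total squared mass removed by time $Tc_n^2$ is $O(c_n^2)$, and hence the empirical second moment of the unsampled population differs from $\sigma_n^2$ by $O_{\mathbf P}(c_n^2/n)=o_{\mathbf P}(1)$ uniformly over $k\le Tc_n^2$. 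With this correction the predictable quadratic variation converges as you want, and (modulo the minor point that Doob's inequality should be applied to $M_n$ rather than to $S_n$ directly, after which the drift bound follows) the rest of your argument goes through.
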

Theorem~\ref{thm:walk convergence} will yield a description of the asymptotic behaviour of the sizes of all but the largest tree of $F_n$. 
\begin{cor}\label{cor: small tree sizes}
We have \[\left(\frac{|T^\downarrow_{n,i+1}|}{c^2_n}, i\ge 1\right)\overset{d}{\to}\left(g_i-d_i, i\ge 1\right)\mbox{ in } L^1\] where $((g_i,d_i), i\ge 1)$ are the excursion intervals of $(R(t), t\le\tau(\frac{1}{\sigma}))$ in decreasing order of length.
\end{cor}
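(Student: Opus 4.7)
The plan is to combine the functional convergence in \refT{thm:walk convergence} with the cyclic Lukasiewicz-type encoding of plane forests. By the cyclic lemma, the multiset of tree sizes of $F_n$ coincides with the multiset of excursion lengths of the walk $S_n$ above its running minimum, once $S_n$ has been cyclically rotated so as to be a valid forest walk; in particular this multiset is invariant under cyclic rotation, so it can be read off from $S_n$ itself.

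First I would extract the relevant first-passage time. Set $\tau_n = \inf\{k \ge 0: S_{n,k} = -c_n\}$; then $\tau_n/c_n^2$ is the first-passage time of the rescaled walk $c_n^{-1} S_{n, \lfloor \cdot c_n^2 \rfloor}$ to $-1$. Brownian motion almost surely passes strictly below any fixed level immediately after hitting it, so first-passage is a continuous functional at $\sigma B$, and \refT{thm:walk convergence} together with the continuous mapping theorem yields $\tau_n/c_n^2 \convdist \tau(1/\sigma)$ jointly with the walk convergence. In particular, the total mass of the small trees satisfies $(n - |T^\downarrow_{n,1}|)/c_n^2 \convdist \tau(1/\sigma) = \sum_{i \ge 1}(d_i - g_i)$.

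Next I would upgrade the walk convergence to convergence of the ordered excursion lengths. After rescaling by $(c_n^2, c_n)$ in time and space, $S_n$ restricted to $[0,\tau_n]$ converges jointly to $\sigma B$ restricted to $[0,\tau(1/\sigma)]$. The decreasing rearrangement of excursion lengths above the running minimum is continuous at almost every Brownian path (where excursion lengths are pairwise distinct a.s.), so the decreasing list of rescaled excursion lengths of $S_n$ on $[0,\tau_n]$ converges in distribution to $(d_i - g_i, i \ge 1)$. The cyclic identification matches these excursions with the sequence of small tree sizes $(|T^\downarrow_{n,i+1}|, i \ge 1)$: the single cyclic excursion corresponding to the giant tree $T^\downarrow_{n,1}$ has length $\Theta(n) \gg c_n^2 = O(\tau_n)$ and is automatically excluded since it cannot fit in $[0,\tau_n]$. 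The upgrade from coordinatewise convergence in distribution to $\ell^1$ convergence then follows by combining the individual coordinate convergences with the convergence of total mass, via a Scheff\'e-type argument for decreasing nonnegative summable sequences.

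The main obstacle is the bookkeeping in the cyclic-vs-linear identification. Because $S_n$ is built from a uniform random permutation, its origin $k = 0$ lies at a cyclically uniform position, and with overwhelming probability step $0$ falls inside the giant cyclic excursion corresponding to $T^\downarrow_{n,1}$. One must therefore argue that the cyclic cut at step $0$ only fragments the giant excursion and leaves the $c_n - 1$ small cyclic excursions intact on $[0,\tau_n]$, and that the ``boundary piece'' of the giant excursion visible at the start of $S_n$ does not create spurious additional small excursions beyond those corresponding to genuine small trees. This should reduce to an exchangeability argument exploiting that $S_n$ is based on a uniform permutation.
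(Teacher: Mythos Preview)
Your high-level strategy---functional convergence of $S_n$, first-passage time convergence, then continuous mapping for ranked excursion lengths, plus a Scheff\'e argument for the $\ell^1$ upgrade---is exactly the paper's route. The one place where you are vaguer than the paper is precisely the ``bookkeeping'' obstacle you flag at the end, and the paper's resolution is worth knowing because it removes the obstacle cleanly rather than arguing around it.

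The paper does not view $S_n$ as a cyclically rotated forest walk with the cut at a uniform position. Instead it sets up a bijection between walks ending at $-c_n$ and \emph{marked cyclic forests}: the first $c_n-1$ excursions of $S_n$ above its running minimum (those ending at times $\inf\{k:S_{n,k}=-j\}$ for $j=1,\ldots,c_n-1$) are by construction exactly the $c_n-1$ non-marked trees, and the remaining segment from $\tau_n:=\inf\{k:S_{n,k}=-(c_n-1)\}$ to $n$ is a single lattice bridge coding the marked tree together with its mark. There is no boundary piece at the start of $S_n$, and nothing to glue; the only thing left is to show that the marked tree coincides with $T^\downarrow_{n,1}$ with high probability, which is immediate once $\tau_n/c_n^2\convdist\tau(1/\sigma)$ since then $|T_{n,c_n}|=n-\tau_n$ has order $n$. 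Note also that the paper stops at level $-(c_n-1)$, not $-c_n$: with your choice $\tau_n=\inf\{k:S_{n,k}=-c_n\}$ you pick up one extra excursion on $[\,\inf\{S_{n,k}=-(c_n-1)\},\tau_n\,]$ which is a fragment of the marked tree; it has length $o_p(c_n^2)$ (both hitting times rescale to $\tau(1/\sigma)$) so it is harmless, but the paper's choice avoids having to say even this.
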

Corollary \ref{cor: small tree sizes} is equivalent to the assertions that
\begin{equation}\label{eqn:sum of small tree sizes}
\frac{1}{c^2_n}\sum\limits_{i\ge2}|T^\downarrow_{n,i}|\overset{d}{\to}\tau(\frac{1}{\sigma})=\sum\limits_{i\ge 1}(g_i-d_i)
\end{equation}
and that for any fixed $j\in\mathbb{N}$,
\begin{equation}\label{eqn:tree size convergence}
\left(\frac{|T^\downarrow_{n,2}|}{c^2_n},\frac{|T^\downarrow_{n,3}|}{c^2_n},\ldots, \frac{|T^\downarrow_{n,j}|}{c^2_n}\right)\overset{d}{\to} (g_1-d_1, g_2-d_2,\ldots, g_{j-1}-d_{j-1}).
\end{equation}

We will prove this corollary in Section \ref{sec:convergence of the processes}.
To describe the limit structure of each tree, we appeal to the following theorem in \cite{BroutinMarckert2014}.

\begin{thm}\label{thm:BroutinMarckert}
Let $\{s_n, n\ge 1\}$ be a degree sequence such that $|s_n|=n\rightarrow\infty, \Delta_n:=\max\{i: s^i_n\neq 0\}=o(n^{1/2})$. Suppose that there exists a distribution $p=(p^i, i\ge 0)$ on $\mathbb{N}$ with mean 1 such that $p_n=(s^i_n/n, i\ge 0)$ converges to $p$ coordinatewise and such that $\sigma(p_n)\rightarrow\sigma(p)\in(0,\infty)$. Let $\mathbb{T}_n$ be the random plane tree under $\mathbb{P}_{s_n}$, the uniform measure on the set of plane trees with degree sequence $s_n$ and let $d_{\mathbb{T}_n}$ be the graph distance in $\mathbb{T}_n$. Then when $n\to\infty$, \[(\mathbb{T}_n, \frac{\sigma(p_n)}{\sqrt{n}}d_{\mathbb{T}_n})\overset{d}{\to}\mathcal{T}\] in the GHP sense. 
\end{thm}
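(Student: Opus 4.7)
The plan is to encode $\mathbb{T}_n$ by its Lukasiewicz walk and then transfer functional scaling limits at the walk level to GHP convergence of the tree itself. Enumerate $v(\mathbb{T}_n)$ in depth-first preorder as $v_{n,1},\ldots,v_{n,n}$, set $X_{n,k}=k_{\mathbb{T}_n}(v_{n,k})-1$ and $W_{n,k}=\sum_{j\le k}X_{n,j}$. Under $\mathbb{P}_{s_n}$, the vector $(X_{n,1}+1,\ldots,X_{n,n}+1)$ is uniform on permutations of $d(s_n)$ satisfying $W_{n,k}\ge 0$ for $k<n$ and $W_{n,n}=-1$; equivalently, by the cycle lemma $\mathbb{T}_n$ can be realized from an unconditioned uniform permutation $(C_{n,1},\ldots,C_{n,n})$ of $d(s_n)$ by cyclically rotating to the almost surely unique starting index producing a valid Lukasiewicz path.

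The first step is to establish the unconditioned walk convergence. Writing $S_{n,k}=\sum_{j\le k}(C_{n,j}-1)$, the increments form a uniform sampling-without-replacement from a population with mean zero, variance $\sigma^2(p_n)\to\sigma^2$, and maximum absolute size bounded by $\Delta_n=o(n^{1/2})$, so H\'ajek's invariance principle yields
\[
\left(\frac{1}{\sqrt{n}}S_{n,\lfloor nt\rfloor},\,0\le t\le 1\right)\xrightarrow{d}\left(\sigma b(t),\,0\le t\le 1\right),
\]
where $b$ is a standard Brownian bridge. Vervaat's theorem, combined with the cyclic rotation supplied by the cycle lemma, then yields functional convergence of the rescaled walk encoding $\mathbb{T}_n$ to $\sigma\mathbf{e}$, with $\mathbf{e}$ the normalized Brownian excursion on $[0,1]$.

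The second step, which I expect to be the main technical obstacle, is to upgrade walk convergence to convergence of the height function $H_n(k)=d_{\mathbb{T}_n}(\text{root},v_{n,k})$. The classical identity expressing $H_n(k)$ as a count of weak right-to-left records of $(W_{n,j},\,0\le j\le k)$, combined with the small-jump hypothesis $\Delta_n=o(n^{1/2})$ and the $L^2$-type control on $p_n$, should give
\[
\left(\frac{\sigma(p_n)}{\sqrt{n}}H_n(\lfloor nt\rfloor),\,0\le t\le 1\right)\xrightarrow{d}\left(2\mathbf{e}(t),\,0\le t\le 1\right).
\]
The key difficulty is controlling, uniformly in $t$, the discrepancy between the rescaled height function and a suitable linear functional of the walk; the bound $\Delta_n=o(n^{1/2})$ is exactly what prevents a single large jump from distorting the record-counting structure at the relevant scale, and is what forces the factor $2$ (rather than $2/\sigma$) in the limit.

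Finally, the map $g\mapsto(T_g,d_g,\mu_g)$, with $\mu_g$ the pushforward of Lebesgue measure on $[0,1]$ under the canonical quotient, is continuous from $C([0,1],\mathbb{R}_{\ge 0})$ to compact measured real trees in the GHP topology, and $2\mathbf{e}$ codes the Brownian CRT $\cT$. Since $(\mathbb{T}_n,\frac{\sigma(p_n)}{\sqrt{n}}d_{\mathbb{T}_n})$ equipped with the uniform measure $\frac{1}{n}\sum_{k=1}^n\delta_{v_{n,k}}$ is, up to vanishing error at the scale in question, isometric to the tree coded by $\frac{\sigma(p_n)}{\sqrt{n}}H_n$ endowed with this counting measure, the height-function convergence of step two delivers the claimed GHP convergence.
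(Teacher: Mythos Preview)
The paper does not prove this statement; it is quoted verbatim as a known result from \cite{BroutinMarckert2014} and used as a black box in the proof of Theorem~\ref{thm:main}. There is therefore no ``paper's own proof'' to compare against.

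That said, your outline is essentially the strategy of the original Broutin--Marckert argument: encode the tree by its Lukasiewicz path, obtain excursion-type convergence of the rescaled walk via a cycle-lemma/Vervaat manoeuvre applied to a bridge limit for the unconditioned sampling-without-replacement walk, and then transfer this to the height (contour) process. You correctly flag the second step as the real work. Two remarks. First, the sentence ``is what forces the factor $2$ (rather than $2/\sigma$) in the limit'' is garbled: the factor $2$ in $2\mathbf{e}$ arises from the walk/height correspondence itself (asymptotically $H_n(k)\approx \tfrac{2}{\sigma^2}W_{n,k}$), not from the small-jump hypothesis; the condition $\Delta_n=o(n^{1/2})$ is what guarantees tightness and the validity of that approximation, not what selects the constant. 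Second, your sketch does not actually supply the argument for the height/walk comparison --- in the fixed-degree-sequence setting this is genuinely delicate (one cannot simply import Marckert--Mokkadem, since the increments are not i.i.d.), and Broutin--Marckert handle it via a careful analysis specific to the exchangeable-increment structure. As an outline your proposal is on the right track, but step two would need substantial work to become a proof.
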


To apply Theorem \ref{thm:BroutinMarckert} to each $T^\downarrow_{n,i}$, we also need to verify that the assumptions of Theorem \ref{thm:main} hold. 
For fixed integers $i\ge 0$ and $l \ge 1$, let \[p^i_{n,l}:=\frac{|\{v\in T^\downarrow_{n,l}:k(v)=i\}|}{|T^\downarrow_{n,l}|} \mbox{ and } p_{n,l}=(p^i_{n,l}, i\ge 0).\]

In Section \ref{sec:verify degree conditions} we prove the following assertions:
\begin{equation}\label{eqn:degree proportion converge} 
\mbox{for any fixed } i\ge 0\mbox{ and }l\ge 1,\ p^i_{n,l}-p^i_n\overset{p}{\to}0, \mbox{ as } n\to\infty,
\end{equation}
and
\begin{equation}\label{eqn: sigma square converge}
\mbox{for any } l\ge 1, \ \sigma^2(p_{n,l})-\sigma^2(p_n)\overset{p}{\to} 0, \mbox{ as } n\to\infty.
\end{equation}

Note that once these two conditions are verified, it follows that for any fixed $l\ge 1$, \[\max\{i: p^i_{n,l}\neq 0\}=o_p(|T^\downarrow_{n,l}|^{1/2})\mbox{ as } n\to\infty;\] see, e.g. Lemma A.1 in \cite{Lei2017+}.

\subsection{Proof of Theorem \ref{thm:main}}\label{sec:proof of main theorem}
Now we are ready to give the proof of Theorem \ref{thm:main}, assuming the results of Section \ref{sec:intro_key_ingredients}.
\begin{proof}
It suffices to prove that for any fixed $j\in\mathbb{N}$, 
\[\left(\frac{\sigma(p_n) T^\downarrow_{n,1}}{n^{1/2}}, \left(\frac{\sigma(p_n) T^\downarrow_{n,l}}{c_n}, 2\le l\le j\right), \frac{n-|T^\downarrow_{n,1}|}{c^2_n}\right)\overset{d}{\to}\left(\cT, (\cT^\downarrow_1,\cdots,\cT^\downarrow_{j-1}), \tau\left(\frac{1}{\sigma}\right)\right).\]
The convergence of the third coordinate is simply (\ref{eqn:sum of small tree sizes}). This in particular implies that $\frac{|T^\downarrow_{n,1}|}{n}\overset{p}{\to} 1$. Since $p_n\to p$ in $L^2$, it straightforwardly follows that with probability $1-o(1)$, $T^\downarrow_{n,1}$ satisfies the conditions of Theorem \ref{thm:BroutinMarckert}; this yields the convergence of the first coordinate. With (\ref{eqn:degree proportion converge}) and (\ref{eqn: sigma square converge}), we can also apply Theorem \ref{thm:BroutinMarckert} to each $T^\downarrow_{n,l}$ with $l\ge 2$ and conclude that
\[
\frac{\sigma(p_{n,l})}{|T^\downarrow_{n,l}|^{1/2}}T^\downarrow_{n,l}\overset{d}{\to}\cT.
\]
Since the trees $(T^\downarrow_{n,l}, l\ge 1)$ are conditionally independent given their degree sequences, it follows that 
\[\left(\frac{\sigma(p_{n,l})}{|T^\downarrow_{n,l}|^{1/2}}T^\downarrow_{n,l}, 2\le l\le j\right)\overset{d}{\to}\left(\tilde{\cT}_{l-1}, 2\le l\le j\right),\]
where $(\tilde{\cT}_l)_{l\in\mathbb{N}}$ are independent copies of $\cT$.
Using (\ref{eqn: sigma square converge}) again, together with (\ref{eqn:tree size convergence}) and Brownian scaling, the convergence of the second coordinate then follows.
\end{proof}

\subsection*{Outline of the rest of the paper}
In Section \ref{sec:combinatorics} we describe a combinatorial construction which associates a {\em marked cyclic forest} with the concatenation of a sequence of first passage bridges, followed by one lattice bridge. This construction is what links Theorem \ref{thm:walk convergence} with random forests. In Section \ref{sec:convergence of the processes} we give the proof of Theorem \ref{thm:walk convergence} and Corollary \ref{cor: small tree sizes}. Finally in Section \ref{sec:verify degree conditions} we prove (\ref{eqn:degree proportion converge}) and (\ref{eqn: sigma square converge}) using martingale concentration inequalities. 

\section{\bf Coding marked forests by lattice paths}\label{sec:combinatorics}

We call a sequence of integers $\mathbf{b}=(b(0), b(1), \ldots, b(n))$ a {\em lattice bridge} if \[b(0)=0, b(n)=-1 \mbox{ and } \forall 0\le i\le n-1, ~b(i+1)-b(i)\ge -1.\] If $\mathbf{b}$ is a lattice bridge and $\min\limits_i\{i:b(i)=-1\}=n$, then we call $\mathbf{b}$ a {\em first passage bridge}. 
Given a lattice bridge $\mathbf{b}$ and a positive integer $k\le n$, we define a lattice bridge $\mathbf{b}^{(k)}$ as follows. First, for $1\le i\le n$, let $b(n+i)=b(n)+b(i)=-1+b(i)$. Then for $0\le i\le n$, let $\mathbf{b}^{(k)}(i)=b(k+i)-b(k)$.
Let $[n]=\{1,\cdots,n\}$.
We have the following elementary lemma as a variant of the classical ballot theorem.

\begin{lem}[Lemma 6.1 in \cite{Pitman2006}]\label{lem:rotation}
Fix a lattice bridge $\mathbf{b}=(b(i), 0\le i\le n)$, and let $r=r(\mathbf{b})\in [n]$ be minimal so that $b(r)=\min (b(i), i\le n)$. Then $\mathbf{b}^{(r)}$ is a first passage bridge, and $r$ is the only such value in $[n]$.
\end{lem}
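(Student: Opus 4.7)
My plan is to handle the existence and uniqueness claims simultaneously by directly translating the condition ``$\mathbf{b}^{(k)}$ is a first passage bridge'' into an equivalent condition on the original values $b(j)$, and then observing that this condition singles out a unique $k \in [n]$.

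First, the lattice bridge properties of $\mathbf{b}^{(k)}$ are automatic for every $k \in [n]$: one has $\mathbf{b}^{(k)}(0) = 0$ by definition, $\mathbf{b}^{(k)}(n) = b(k+n) - b(k) = -1$ from the extension $b(n+i) = -1 + b(i)$ combined with $b(n) = -1$, and the step condition $\mathbf{b}^{(k)}(i+1) - \mathbf{b}^{(k)}(i) \ge -1$ is inherited from $\mathbf{b}$ and its periodic extension. So the only substantive requirement is the first passage condition $\mathbf{b}^{(k)}(i) \ge 0$ for every $0 \le i \le n-1$.

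I would split this requirement according to whether $k + i \le n$ or $k + i > n$. For $0 \le i \le n - k$, it reads $b(k+i) \ge b(k)$, i.e., $b(j) \ge b(k)$ for all $k \le j \le n$. For $n - k < i \le n - 1$, writing $j = k + i - n \in \{1, \ldots, k-1\}$, the extension gives
\[
\mathbf{b}^{(k)}(i) = b(k+i) - b(k) = -1 + b(j) - b(k),
\]
so the requirement becomes $b(j) \ge b(k) + 1$, i.e., (since the values are integers) $b(j) > b(k)$, for every $1 \le j \le k - 1$. Combined with $b(0) = 0 > -1 = b(n) \ge b(k)$ (whose last inequality is a special case of the first half, taking $i = n - k$), these two conditions are exactly the statement that $b(k) = \min(b(i), 0 \le i \le n)$ and that $k$ is the minimal index in $[n]$ at which this minimum is attained. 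That characterization picks out $k = r(\mathbf{b})$ uniquely, which gives both claims at once.

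The main (and really only) obstacle is bookkeeping with the cyclic indices; no probabilistic input is involved, and the argument is a deterministic rephrasing of the classical cycle lemma for lattice paths with minimum step $-1$.
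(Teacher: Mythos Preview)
Your proof is correct and complete; the bookkeeping with the cyclic indices is handled cleanly, and the equivalence you establish between the first passage condition for $\mathbf{b}^{(k)}$ and the characterization ``$k$ is the first index achieving the global minimum'' is exactly right. Note that the paper does not supply its own proof of this lemma --- it is quoted as Lemma~6.1 of \cite{Pitman2006} and used as a black box --- so there is nothing to compare against; your argument is in fact the standard proof of the cycle lemma.
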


Lemma \ref{lem:rotation} is illustrated by Figure \ref{fig: latticebridge_firstpassagebridge_marketree}(a) and Figure \ref{fig: latticebridge_firstpassagebridge_marketree}(b). In Figure \ref{fig: latticebridge_firstpassagebridge_marketree}(a) we have a lattice bridge $\mathbf{b}=(0,-1,-1,-2,-1,1,0,-1)$. The vertical dashed line shows the position of $\mathbf{b}$ attaining its minimum for the first time, hence the unique position for the cyclic shift to transform $\mathbf{b}$ to a first passage bridge, as claimed by Lemma \ref{lem:rotation}. The resulting first lattice bridge, with steps $\mathbf{b}^{(3)}$, is shown in Figure \ref{fig: latticebridge_firstpassagebridge_marketree}(b).

%
%

\begin{figure}[h]
\centering
\begin{minipage}{\textwidth}
\includegraphics[scale=1]{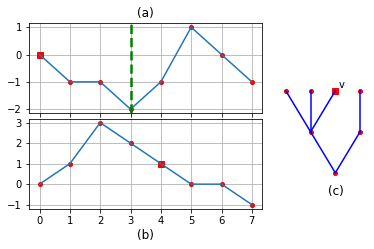}
\caption{(a) a lattice bridge; (b) the corresponding marked first-passage bridge; (c) the corresponding marked tree.}
\label{fig: latticebridge_firstpassagebridge_marketree}
\end{minipage}
\end{figure}

A plane tree is a rooted tree $T$ in which the children of each node have a left-to-right order. Recall that for a plane tree $T$ and a node $v\in v(T)$, we write $k_T(v)$ to denote the degree of $v$ in $T$. We also write $\mathrm{lex}(T)=(k_T(u_1), \ldots, k_T(u_{|T|}))$ where $(u_i, 1\le i\le |T|)=(u_i(T), 1\le i\le |T|)$ are nodes of $T$ listed in lexicographic order. 

For any sequence $\mathbf{c}=(c_1, \cdots, c_n)\in\mathbb{R}^n$, we write $W_{\mathbf{c}}(j)=\sum\limits_{i=1}^j(c_i-1)$ for $j\in [n]$, let $W_{\mathbf{c}}(0)=0$ and make $W_{\mathbf{c}}$ a continuous function on $[0, n]$ by linear interpolation. 
A classical bijection between plane trees and first passage bridges associates to a tree $T$ its {\em depth-first walk} $(W_{\mathrm{lex}(T)}(i), 0\le i\le n)$, see, e.g. Chapter 6 of \cite{Pitman2006}. We build on this bijection below.

For a plane tree $T$ and $v\in v(T)$, we call the pair $(T, v)$ a {\em marked tree} and call $v$ the {\em mark}. The bijection between first passage bridges and plane trees also leads to a bijection between lattice bridges and marked trees. This bijection, depicted in Figure \ref{fig: latticebridge_firstpassagebridge_marketree}, is specified as follows. For a lattice bridge $\mathbf{b}$, let $r=r(\mathbf{b})$ as in Lemma \ref{lem:rotation}, let $\mathbf{b}'=\mathbf{b}^{(r)}$ be the first passage bridge corresponding to $\mathbf{b}$, and let $T$ be the plane tree with depth-first walk $\mathbf{b}'$. Then the marked node is $v=u_{|T|-r+1}(T)$, the $(|T|-r+1)$'st node of $T$ in lexicographic order. The mark $v$ is denoted by a red square in Figure \ref{fig: latticebridge_firstpassagebridge_marketree}. 


A {\em marked forest} is a pair $(F, v)$ where $F$ is a plane forest and $v\in v(F)$. We refer $v$ as the {\em mark} of $(F,v)$. A {\em marked cyclic forest} is a marked forest with its mark in its last tree; the name is because we can equivalently view such a forest as having its trees arranged around a cycle.

Fix an integer sequence $W=(W_i:0\le i\le n)$ with $W_0=0, W_n=-k$, and $W_i-W_{i-1}\ge -1$ for all $1\le i\le n$. 
The bijections described above allow us to view $W$ as a marked cyclic forest $(F,v)=(F(W),v(W))$ consisting of $k-1$ trees and one marked tree, as follows. For integer $b<0$, let $\tau(b)=\inf\{t\in\mathbb{N}: W_t\le b\}$. For $1\le j\le k-1$, let $T_j$ be the tree whose depth-first walk is $(W_i-W_{\tau(-(j-1))}: \tau(-(j-1))\le i\le \tau(-j))$.  Let $(T_k,v)$ be the marked tree corresponding to lattice bridge $(W_i-W_{\tau(-(k-1))}: \tau(-(k-1))\le i\le n)$.  Then $(F(W),v(W))=((T_1,\ldots,T_k),v)$.  We call $W$ the {\em coding walk} of the forest, and note that the coding is bijective: $W$ can be recovered from $(F(W),v(W))$ as the concatenation of the first-passage bridges which code $T_1,\ldots,T_{k-1}$ and the lattice bridge which codes $(T_k,v)$. This bijection is illustrated in Figure \ref{fig:bridge to minus k} and Figure \ref{fig:marked forest}. In Figure \ref{fig:bridge to minus k} the whole sequence is decomposed into three segments (divided by vertical dashed lines). The first two segments are first passage bridges, hence correspond to plane trees $T_1, T_2$. The last part is a lattice bridge, hence corresponds to a marked tree $(T_3, v)$ and the node $v$ is again depicted by a square mark. These trees are shown in Figure \ref{fig:marked forest}. 

\begin{figure}[h]
\centering
\begin{minipage}{\textwidth}
\centering
\includegraphics[scale=0.9]{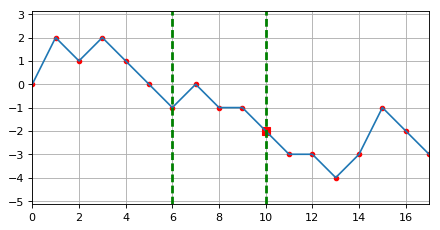}
\caption{A lattice walk $W=(W_i: 0\le i\le 17)$.}
\label{fig:bridge to minus k}
\end{minipage}\vfill
\begin{minipage}{\textwidth}
\centering
\includegraphics[scale=1]{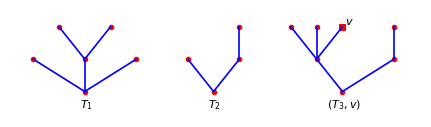}
\caption{The marked forest $(F(W),v(W))=((T_1, T_2, T_3), v)$.}
\label{fig:marked forest}
\end{minipage}
\end{figure}

Given a degree sequence $s=(s^{(i)}, i\ge 0)$ with $\sum_{i\ge 0}s^i=n$, recall from Section \ref{sec:intro_key_ingredients} that $d(s)\in\mathbb{Z}_{\ge 0}^n$ is the vector whose entries are weakly increasing and with $s^{(i)}$ entries equal to $i$, for each $i\ge 0$. 
Let $\mathrm{D}(s)$ be the set of sequences $d\in\mathbb{Z}_{\ge 0}^n$ which are permutations of $d(s)$ (there are $n!/(\prod_{i}s^i!)$ of them).
Let $\mathrm{MCF}(s)$ be the set of all marked cyclic forests with degree sequence $s$. By the correspondence we developed previously, the following is lemma is immediate. 

\begin{lem}\label{lem:bijection}
Fix a degree sequence $s=(s^{(i)}, i\ge 0)$ with $\sum_{i\ge 0}s^i=n$. Let $\pi$ be a uniformly random permutation of $[n]$, and let $W=W_{\pi(d(s))}$. Then the marked cyclic forest $(F(W),v(W))$ coded by $W$ is uniformly distributed on $\mathrm{MCF}(s)$. 
\end{lem}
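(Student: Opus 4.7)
The plan is to package the combinatorial construction preceding the statement as a bijection $\Phi \colon \mathrm{D}(s) \to \mathrm{MCF}(s)$, and then to observe that the law of $\pi(d(s))$ transfers under $\Phi$ to the uniform distribution on $\mathrm{MCF}(s)$ whenever $\pi$ is uniform on the set of permutations of $[n]$.

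First I would verify that if $d \in \mathrm{D}(s)$ and $W$ is the walk with increments $d_i-1$, then $(F(W), v(W)) \in \mathrm{MCF}(s)$, i.e.\ the associated marked cyclic forest has degree sequence $s$. This reduces to two observations. For each unmarked tree $T_j$ with $j < c(s)$, the construction is the classical depth-first walk bijection, under which the $i$-th node in lexicographic order has degree equal to one more than the $i$-th step of the first-passage bridge coding $T_j$. For the marked tree $(T_{c(s)}, v)$, the construction additionally passes through the cyclic shift of Lemma~\ref{lem:rotation}, which only reorders the steps and therefore preserves their multiset. Consequently the multiset of node degrees in $F(W)$ equals the multiset $\{d_1, \ldots, d_n\}$, so $(F(W), v(W))$ has degree sequence $s$.

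Next I would argue that $\Phi$ is a bijection. The explicit inverse is essentially described in the paragraph introducing $(F(W), v(W))$: given $(F, v) = ((T_1, \ldots, T_k), v) \in \mathrm{MCF}(s)$ the walk $W$ is reconstructed uniquely by concatenating the depth-first walks of $T_1, \ldots, T_{k-1}$ with the unique lattice bridge coding $(T_k, v)$, the last step being well-defined because the bijection between lattice bridges and marked trees provided by Lemma~\ref{lem:rotation} is explicit. Combining with the preceding paragraph, $\Phi$ maps $\mathrm{D}(s)$ bijectively onto $\mathrm{MCF}(s)$.

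For the probabilistic step, I would note that every $d \in \mathrm{D}(s)$ is the image of exactly $\prod_{i \ge 0} (s^i)!$ permutations of $[n]$ under $\pi \mapsto \pi(d(s))$, so $\pi(d(s))$ is uniform on $\mathrm{D}(s)$ when $\pi$ is uniform on the set of permutations of $[n]$. Transporting along $\Phi$ then yields that $(F(W), v(W))$ is uniform on $\mathrm{MCF}(s)$. I do not anticipate a real obstacle: all of the structural work has been carried out in the preceding bijective discussion, and the remaining probabilistic content is just the orbit-counting identity above.
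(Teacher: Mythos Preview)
Your proposal is correct and matches the paper's approach: the paper simply states that the lemma is ``immediate'' from the bijective correspondence between $\mathrm{D}(s)$ and $\mathrm{MCF}(s)$ developed in the preceding paragraphs, and you have spelled out exactly those details---that $\Phi$ is a bijection and that $\pi(d(s))$ is uniform on $\mathrm{D}(s)$.
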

In particular, we have the following corollary.
\begin{cor}
Let $s=(s^{(i)}, i\ge 0)$ with $\sum_{i\ge 0}s^i=n$. Let $(F,v)$ be a uniformly random element of $\mathrm{MCF}(s)$, and let $M$ be the total number of nodes in the non-marked trees of $(F,v)$. Let $\pi$ be a uniformly random permutation of $[n]$ and let $S:[0, n]\rightarrow \mathbb{R}, ~S(t)=W_{\pi(d(s))}(t)$. Then 
\begin{equation}\label{eqn:size of small trees}
M\overset{d}{=}\inf\{t: S(t)=-c(s)+1\}.
\end{equation}
\end{cor}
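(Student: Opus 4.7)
The plan is to reduce the corollary to a deterministic identity about the coding walk. First I would apply Lemma \ref{lem:bijection}: if $\pi$ is a uniformly random permutation of $[n]$ and $W = W_{\pi(d(s))}$, then $(F(W),v(W))$ is uniform on $\mathrm{MCF}(s)$, so it has the same distribution as $(F,v)$. Consequently, if $M(W)$ denotes the total number of nodes in the non-marked trees of $(F(W),v(W))$, then $M \overset{d}{=} M(W)$. It therefore suffices to show the pointwise identity
\[
M(W) \;=\; \inf\{t:\; W(t)=-c(s)+1\}
\]
for every admissible lattice walk $W$ (of length $n$ ending at $-c(s)$), and then use $S=W_{\pi(d(s))}=W$ to conclude.

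To establish this identity, I would unpack the bijective description of $(F(W),v(W))$ given just above Lemma \ref{lem:bijection}. Writing $k=c(s)$ and $\tau(b)=\inf\{t\in\mathbb{N}: W(t)\le b\}$, the walk $W$ is the concatenation of the first-passage bridges coding $T_1,\ldots,T_{k-1}$ (of lengths $\tau(-1),\tau(-2)-\tau(-1),\ldots,\tau(-(k-1))-\tau(-(k-2))$ respectively) and the lattice bridge over $[\tau(-(k-1)),n]$ which codes the marked tree $(T_k,v)$. Since each first-passage bridge strictly decreases the running minimum by $1$, the walk first hits level $-(k-1)$ precisely at time $\tau(-(k-1))$, i.e.\
\[
\inf\{t:\; W(t)=-(k-1)\} \;=\; \tau(-(k-1)) \;=\; \sum_{j=1}^{k-1}|T_j| \;=\; M(W),
\]
which is the desired identity (including the boundary case $k=1$, where both sides equal $0$).

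Combining the two displays with the distributional equality $M\overset{d}{=}M(W)$ and the definition $S=W_{\pi(d(s))}$ gives the claim. The only delicate point is the bookkeeping between ``$-c(s)+1$'' in the statement and ``$-(k-1)$'' produced by the decomposition, but these agree, and the rest is immediate from the construction of $(F(W),v(W))$. No probabilistic argument is required beyond invoking Lemma \ref{lem:bijection}, so I do not anticipate a substantive obstacle.
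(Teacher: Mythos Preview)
Your proposal is correct and is exactly the argument the paper has in mind: the corollary is stated as an immediate consequence of Lemma~\ref{lem:bijection} together with the explicit description of the bijection $W\mapsto(F(W),v(W))$, which you have simply written out in detail. The only thing you are adding to the paper is the (routine) verification that $\tau(-(k-1))=\sum_{j<k}|T_j|$ and that this coincides with $\inf\{t:S(t)=-(k-1)\}$, which is straightforward from the step constraint and the construction.
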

 We will also need the following easy fact connecting linear forests with marked cyclic forests.
\begin{lem}\label{lem:maps forest to marked cyclic forest}
Fix a degree sequence $s=(s^{(i)}, i\ge 0)$, and let $F$ be a uniformly random linear forest with degree sequence $s$, and let $(F^*,v)$ be the marked cyclic forest obtained from $F$ by marking a uniformly random node and applying the requisite cyclic shift of the trees of $F$. Then $(F^*,v)$ is a uniformly random element of $\mathrm{MCF}(s)$. 
\end{lem}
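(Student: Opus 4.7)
The plan is to exhibit a constant-fibre map and conclude by direct counting. Let
\[
P(s) = \{(F, v) : F \text{ is a linear plane forest with degree sequence } s,\ v \in v(F)\}.
\]
Since every such $F$ has exactly $n = \sum_i s^i$ nodes, sampling $(F, v)$ uniformly from $P(s)$ is equivalent to picking $F$ uniformly among linear plane forests with degree sequence $s$ and then picking $v$ uniformly from $v(F)$, which is precisely the construction in the lemma statement. Thus it suffices to show that the map $\Phi : P(s) \to \mathrm{MCF}(s)$ sending $(F, v)$ to $(F^*, v)$, where $F^*$ is the cyclic shift of $F$ placing the tree containing $v$ at the end, pushes the uniform measure on $P(s)$ forward to the uniform measure on $\mathrm{MCF}(s)$.

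The key step is to verify that $\Phi$ is $c(s)$-to-one. Given $(F^*, v) \in \mathrm{MCF}(s)$ with $F^* = (T'_1, \ldots, T'_c)$ and $v \in v(T'_c)$, I would list the preimages explicitly: for each $j \in \{1, \ldots, c\}$, let
\[
F_j := (T'_{c-j+1}, \ldots, T'_c, T'_1, \ldots, T'_{c-j})
\]
be the shift of $F^*$ placing $T'_c$ in position $j$, and let $v_j$ denote the copy of $v$ sitting in that position of $F_j$. By construction $\Phi(F_j, v_j) = (F^*, v)$, and conversely any preimage must cyclically permute the same sequence of trees and have its mark in the tree at some position $j \in \{1, \ldots, c\}$, so this list is exhaustive. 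The one delicate point, and what I expect to be the main (mild) obstacle, is to check that these $c$ pairs are genuinely distinct in $P(s)$ even when $F^*$ has nontrivial cyclic symmetry so that $F_{j_1} = F_{j_2}$ as linear forests for some $j_1 \ne j_2$: in that case the marked nodes $v_{j_1}$ and $v_{j_2}$ still lie in trees occupying different positions of this common linear forest, so the pairs $(F_{j_1}, v_{j_1})$ and $(F_{j_2}, v_{j_2})$ remain distinct elements of $P(s)$.

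Once the $c$-to-one property is in hand, the conclusion is immediate: for every $(F^*, v) \in \mathrm{MCF}(s)$,
\[
\mathbf{P}\{\Phi(F, v) = (F^*, v)\} = \frac{|\Phi^{-1}(F^*, v)|}{|P(s)|} = \frac{c(s)}{|P(s)|},
\]
which does not depend on the chosen element of $\mathrm{MCF}(s)$. Hence $\Phi(F, v)$ is uniformly distributed on $\mathrm{MCF}(s)$, which is exactly the assertion of the lemma.
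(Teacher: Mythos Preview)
Your proof is correct and is essentially the same as the paper's, just more carefully written out. The paper's proof consists of a single sentence asserting that marking a node induces an $n$-to-$c(s)$ map from $\mathrm{F}(s)$ to $\mathrm{MCF}(s)$; your argument unpacks this by passing to the set of pairs $P(s)$ and verifying that $\Phi$ is $c(s)$-to-one, including the careful check that preimages remain distinct under cyclic symmetry --- a point the paper leaves implicit.
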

\begin{proof}
Let $\mathrm{F}(s)$ be the set of all plane forests with degree sequence $s$. 
The operation of marking a node induces an $n$-to-$c(s)$ map from $\mathrm{F}(s)$ to $\mathrm{MCF}(s)$, from which the lemma is immediate.
\end{proof}

The preceding lemma allows us to relate the random forest $F_n$ from Theorem \ref{thm:main} with the lattice path $S_n=(S_{n,k},0 \le k \le n)$ from Theorem \ref{thm:walk convergence}. Let $(F_n^*,v_n)=((T_{n,k},1 \le k \le c_n),v_n)$ be obtained from $F_n$ by marking a uniformly random node and applying the requisite cyclic shift of the trees of $F_n$. Then we may couple $F_n$ and $S_n$ so that $S_n = (S_{n,j},0 \le j \le n)$ is the coding walk of $(F_n^*,v_n)$. We work with such a coupling for the remainder of the paper.

\section{\bf Convergence of the coding processes}\label{sec:convergence of the processes}
The goal of this section is to prove Theorem \ref{thm:walk convergence} and Corollary \ref{cor: small tree sizes}. To achieve that, we decompose the walk process into two random processes. To be precise, let $d_n:=\frac{n^{1/2}}{c_n}$ and fix a sequence $(t_n)_{n\in\mathbb{N}}$, such that $t_n=o(d_n)$ and $t_n=\omega(1)$. This is possible since $d_n\to\infty$ as $n\to\infty$ by our assumption that $c_n=o(n^{1/2})$. We consider the following two processes. Let $(M_{n,k}, k\le n)$ be as follows, $M_{n,0}=0$ and for $k\ge 1$, \[M_{n,k}-M_{n, k-1}=X_{n,k}\mathbbm{1}_{|X_{n,k}|<t_n}.\] Similarly, let $(R_{n,k}, k\le n)$ be given by $R_{n,0}=0$, and for $k\ge 1$, \[R_{n,k}-R_{n, k-1}=X_{n,k}\mathbbm{1}_{|X_{n,k}|\ge t_n}.\] Then clearly we have $S_{n,k}=M_{n,k}+R_{n,k}$ for all $k\le n$. Define the following quantity: \[\mu^+_n:=\sum\limits_{j\ge t_n+1}(j-1) \frac{s^j_n}{n}.\]

Theorem \ref{thm:walk convergence} is an immediate consequence of the following two results: 
\begin{equation}\label{eqn:M process convergence}
\left(\frac{1}{c_n}(M_{n, \lfloor tc^2_n \rfloor}+\mu^+_n\lfloor tc^2_n \rfloor), t\ge 0 \right)\overset{d}{\to}\left(\sigma B(t), t\ge 0\right)
\end{equation}
and
\begin{equation}\label{eqn:R process convergence}
\left(\frac{1}{c_n}(R_{n, \lfloor tc^2_n \rfloor}-\mu^+_n\lfloor tc^2_n \rfloor), t\ge 0 \right)\overset{d}{\to} 0,
\end{equation}
where $0$ denotes a process $Z$ such that $\p{Z(t)=0,~ \forall t\ge 0}=1$.
For (\ref{eqn:M process convergence}), we are going to use the following theorem from \cite{DiFr1980}.
\begin{thm}[Theorem 4 in \cite{DiFr1980}]\label{thm:finite exchangeable sequences}
Suppose an urn $U$ contains $n$ balls, each marked by one or another element of the set $S$, whose cardinality $c$ is finite. Let $H_{Uk}$ be the distribution of $k$ draws made at random without replacement from $U$, and $M_{Uk}$ be the distribution of $k$ draws made at random with replacement. Then the two probabilities on $S^k$ satisfy
\[||H_{Uk}-M_{Uk}||\le 2ck/n,\]
where $||\cdot||$ denotes the total variation distance.
\end{thm}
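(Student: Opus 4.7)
The plan is to prove this via a coupling argument. Construct on a common probability space samples $(X_1, \ldots, X_k) \sim M_{Uk}$ and $(Y_1, \ldots, Y_k) \sim H_{Uk}$ with $\p{X_i \neq Y_i \mbox{ for some } i \le k} \le ck/n$; the bound $2ck/n$ on total variation then follows from the standard coupling inequality $\|\mu - \nu\| \le 2\p{X \neq Y}$.

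First I would build the coupling sequentially. Given the history $(Y_1, \ldots, Y_{i-1})$, the conditional with-replacement marginal of $X_i$ is always $p_s = n_s/n$, while the without-replacement marginal of $Y_i$ is $q_s^{(i)} = (n_s - N_s^{(i-1)})/(n-i+1)$, where $N_s^{(i-1)}$ denotes the count of type-$s$ elements in the prefix. Use the maximal coupling of these two marginals conditional on agreement thus far; once the sequences diverge, draw $X_i$ and $Y_i$ independently from their respective marginals. The overall failure probability is then bounded by $\sum_{i=1}^k \E{d_{TV}(p, q^{(i)})}$.

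Second, I would estimate the per-step total variation distance. Using the identity $p_s - q_s^{(i)} = (N_s^{(i-1)} - (i-1)p_s)/(n-i+1)$, together with the observation that only $c$ types contribute nontrivially to the sum over $s$, I obtain a direct bound $d_{TV}(p, q^{(i)}) \le c(i-1)/(2(n-i+1))$. A naive summation over $i \le k$ yields only the suboptimal estimate $O(ck^2/n)$. To achieve the sharp $ck/n$ scaling, I would instead analyze the Radon-Nikodym derivative $dH_{Uk}/dM_{Uk}$ directly: it is an explicit product of ratios of falling factorials to powers of $n$, and for any outcome $(x_1, \ldots, x_k)$ involving at most $c$ distinct types, its deviation from $1$ can be controlled by the inequality $1 - \prod_j(1 - a_j) \le \sum_j a_j$ applied with only $O(c)$ nontrivial factors, yielding the desired scaling after integrating with respect to $M_{Uk}$.

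The main obstacle is precisely this improvement from the quadratic-in-$k$ union-bound estimate to the sharp linear-in-$k$ bound in the statement. The crucial point is that the $c$-sparseness of the type space must be exploited globally, across all $k$ steps at once, rather than at each step individually. This is why the Radon-Nikodym computation above (or an equivalent combinatorial identity, as in the original Diaconis-Freedman argument) is more effective than the sequential coupling: it allows the $c$-sparseness to interact with the full length $k$ through a single summation, rather than being compounded $k$ times through successive one-step estimates.
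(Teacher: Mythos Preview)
The paper does not prove this statement; it is quoted as Theorem~4 of Diaconis--Freedman (1980) and invoked as a black box in the proof of~(\ref{eqn:M process convergence}). There is therefore no proof in the paper to compare your attempt against.

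On the proposal itself: you correctly diagnose that the sequential maximal coupling loses a factor of~$k$, but your Radon--Nikodym sketch does not recover it. Writing $m_s=\#\{i\le k:x_i=s\}$, the density is
\[
\frac{dH_{Uk}}{dM_{Uk}}(x_1,\dots,x_k)=\frac{n^k}{n(n-1)\cdots(n-k+1)}\ \prod_{s\in S}\ \prod_{j=0}^{m_s-1}\Bigl(1-\frac{j}{n_s}\Bigr),
\]
a product of $\sum_s m_s=k$ elementary factors, not $O(c)$. Grouping by type gives at most $c$ blocks, but applying $1-\prod_j(1-a_j)\le\sum_j a_j$ blockwise only yields the bound $\sum_s m_s(m_s-1)/(2n_s)$, whose expectation under $M_{Uk}$ is $k(k-1)/(2n)$ --- again quadratic in $k$, and with no $c$ appearing at all. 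The $c$ in the Diaconis--Freedman bound does not come from ``sparseness of the product''; in their paper it enters through their finite de~Finetti theorem (their Theorem~3), which shows that the $k$-dimensional marginal of \emph{any} exchangeable law on $S^n$ lies within $2ck/n$ of a mixture of i.i.d.\ laws, and then specializes this to the urn law, whose de~Finetti mixture is exactly $M_{Uk}$. Reproducing that argument, or supplying a coupling that is genuinely sharper than the sequential one, is the step your proposal is missing.
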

\begin{proof}[Proof of (\ref{eqn:M process convergence})]
Let $(\tilde{X}_{n,k}, k\le n)$ be i.i.d. with the law of $X_{n,1}\mathbbm{1}_{|X_{n,1}|<t_n}$, set $\tilde{M}_{n,0}=0$ and for $k\ge 1$, let \[\tilde{M}_{n,k}=\sum\limits_{j=1}^k \tilde{X}_{n,j}.\]
 
Now apply Theorem \ref{thm:finite exchangeable sequences} with urn $U$ containing $n$ balls, with $s^j_n$ balls marked by $j-1$ for $0\le j\le t_n, j\neq 1$, and $s^1_n+\sum\limits_{j>t_n}s^j_n$ balls marked by 0, with $S=\{-1,0,1,\ldots, t_n-1\}$, and with $k=k(n)=n/d_n$. This yields that 
\[||(X_{n,j}\mathbbm{1}_{|X_{n,j}|<t_n}, j\le k)-(\tilde{X}_{n,j}, j\le k)||\le \frac{2t_n(n/d_n)}{n}=2\frac{t_n}{d_n},\]
so for all Borel $B\subset\mathbb{R}^k$,
\[\left|\p{(M_{n,j}, j\le k(n))\in B}-\p{(\tilde{M}_{n,j}, j\le k(n))\in B}\right|\le \frac{2t_n}{d_n}.\]
Since $t_n=o(d_n)$ and $k(n)=n/d_n=d_n\cdot c^2_n=\omega(c^2_n)$, this implies that to establish (\ref{eqn:M process convergence}) it suffices to prove that
\begin{equation}\label{eqn: M iid process convergence}
\left(\frac{1}{c_n}(\tilde{M}_{n, \lfloor tc^2_n \rfloor}+\mu^+_n\lfloor tc^2_n \rfloor), t\ge 0 \right)\overset{d}{\to}\left(\sigma B(t), t\ge 0\right).
\end{equation}
Note that
\[\e{\tilde{X}_{n,1}}=\sum\limits_{j\le t_n}(j-1)\frac{s^j_n}{n}= \frac{1}{n}\sum\limits_{j}(j-1)s^j_n-\sum\limits_{j\ge t_n+1}(j-1)\frac{s^j_n}{n}=-\frac{c_n}{n}-\mu^+_n.\]

Define $\sigma^-_n$ by setting 
\begin{eqnarray}\label{eqn: dfn of sigma-}
(\sigma^-_n)^2:=Var(\tilde{X}_{n,1})&=&\E{\tilde{X}^2_{n,1}}-\E{\tilde{X}_{n,1}}^2\nonumber\\
&=&\sum\limits_{j\le t_n}(j-1)^2\frac{s^j_n}{n}-(-\mu^+_n-\frac{c_n}{n})^2.
\end{eqnarray}

Applying Donsker's theorem to the process $\left(\tilde{M}_{n,k}+k(\mu^+_n+\frac{c_n}{n}), k\ge 0\right)$, we obtain that
\begin{equation}\label{eqn:M process finite convergence}
\left(\frac{1}{a}(\tilde{M}_{n, \lfloor ta^2 \rfloor}+\mu^+_n\lfloor ta^2 \rfloor)+\frac{c_n\lfloor ta^2\rfloor}{na}, t\ge 0\right)\overset{d}{\to}\left(\sigma^-_n B(t), t\ge 0\right),
\end{equation}
as $a\to\infty$.

By our assumption that $n^{-1}\cdot s_n\to p$ in $L^2$ in Theorem \ref{thm:main}, we have $\sum\limits_{j}(j-1)\frac{s^j_n}{n}\to 0$ as $n\to \infty$. Hence for any prescribed $\delta>0$, we can find $L$ large such that $\sum\limits_{j>L}(j-1)\frac{s^j_n}{n}<\delta$. Since $t_n\to\infty$, we must have $\mu^+_n\le\sum\limits_{j>L}(j-1)\frac{s^j_n}{n}<\delta$ for $n$ large enough, i.e. 
\begin{equation}\label{eqn: mean of large degree vanish}
\mu^+_n\to 0 \mbox{ as } n\to\infty. 
\end{equation}
Similarly the assumption that $n^{-1}\cdot s_n\to p$ in $L^2$ implies that \[\sigma^2_n:=\sum\limits_j j(j-1)\frac{s^j_n}{n}\to\sigma^2<\infty,\] so
\begin{equation}\label{eqn:variance of large degree vanish}
(\sigma^+_n)^2\to 0 \mbox{ as } n\to\infty,
\end{equation}
where we let $(\sigma^+_n)^2:=\sum\limits_{j\ge t_n+1}j(j-1)\frac{s^j_n}{n}$.
Using (\ref{eqn: dfn of sigma-}),(\ref{eqn: mean of large degree vanish}) and (\ref{eqn:variance of large degree vanish}), we have 
\begin{equation}\label{eqn:variance convergence}
\sigma_n^2-(\sigma^-_n)^2=(\sigma^+_n)^2-(\mu^+_n+\frac{c_n}{n})(1-\mu^+_n-\frac{c_n}{n})\to 0 \mbox{ as } n\to\infty,
\end{equation}
so $\sigma^-_n\to\sigma$ as $n\to\infty$.
Taking $a=c_n$ in (\ref{eqn:M process finite convergence}), then letting $n\to\infty$, now yields that
\[\left(\frac{1}{c_n}(\tilde{M}_{n, \lfloor tc^2_n \rfloor}+\mu^+_n\lfloor tc^2_n \rfloor)+\frac{\lfloor tc^2_n \rfloor}{n}, t\ge 0 \right)\overset{d}{\to}\left(\sigma B(t), t\ge 0\right).\]
Since $c^2_n=o(n)$, (\ref{eqn: M iid process convergence}) follows.
\end{proof}

To prove (\ref{eqn:R process convergence}), we need the following result concerning \emph{dilation}. Recall (or see, e.g., \cite{Aldous1985}) that given real random variables $U, V$, we say $U$ is a \emph{dilation} of $V$ if there exist random variables $\hat{U}, \hat{V}$ such that \[\hat{U}\overset{d}{=}U,~ \hat{V}\overset{d}{=}V \mbox{ and } \E{\hat{U}|\hat{V}}=\hat{V}.\]

\begin{prop}[Proposition 20.6 in \cite{Aldous1985}]\label{prop:dilution}
Suppose $X_1, \cdots, X_k$ and $X^\ast_1, \cdots, X^\ast_k$ are samples from the same finite population $x_1,\cdots, x_n$, without replacement and with replacement, respectively. Let $S_k=\sum\limits_{i=1}^k X_i, S^\ast_k=\sum\limits_{i=1}^k X^\ast_i$. Then $S^\ast_k$ is a dilation of $S_k$. In particular, $\E{\phi(S^\ast_k)}\ge\E{\phi(S_k)}$ for all continuous convex function $\phi:\mathbb{R}\rightarrow\mathbb{R}$.
\end{prop}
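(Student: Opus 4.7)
The ``in particular'' part follows from the dilation claim by Jensen's inequality: if $\E{\hat U \mid \hat V} = \hat V$ and $\phi$ is continuous and convex, then $\E{\phi(\hat U)} = \E{\E{\phi(\hat U)\mid \hat V}} \ge \E{\phi(\E{\hat U\mid \hat V})} = \E{\phi(\hat V)}$. So the main task is to exhibit a coupling $(\hat U, \hat V)$ with $\hat U \eqdist S^*_k$, $\hat V \eqdist S_k$ and $\E{\hat U\mid \hat V} = \hat V$. My plan is to split this into two stages: first prove the convex-order inequality $\E{\phi(S^*_k)} \ge \E{\phi(S_k)}$ for every continuous convex $\phi$, and then invoke Strassen's theorem to extract a coupling with the required martingale property.

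For the convex-order inequality I would follow Hoeffding's classical strategy for sampling inequalities. The slickest route bounds moment generating functions: prove $\E{e^{\lambda S_k}} \le \E{e^{\lambda S^*_k}}$ for every $\lambda\in\R$, then transfer to arbitrary continuous convex $\phi$ by a density argument on the compact interval in which both sums take values. The case $k=2$ is a one-line calculation:
\[\E{e^{\lambda S^*_2}} - \E{e^{\lambda S_2}} = \frac{1}{n-1}\pran{\E{e^{2\lambda x_{I_1}}} - \E{e^{\lambda x_{I_1}}}^2} \ge 0,\]
where the right-hand side is non-negative because it equals $\frac{1}{n-1}\mathbf{Var}(e^{\lambda x_{I_1}})$. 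For general $k$ I would proceed by induction, expressing $\E{e^{\lambda S_k}}$ as an average over ordered $k$-subsets and pairing each term with its with-replacement counterpart, so that the discrepancy telescopes into a non-negative ``collision'' correction of the same flavour as the $k=2$ variance above.

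An alternative and more constructive route is to build the coupling explicitly. Starting from an iid sample $(I_1,\ldots,I_k)\in[n]^k$, apply a sanitization procedure---scan left to right and replace any $I_j$ that coincides with an earlier entry by a fresh uniform draw from the indices not yet used---to obtain a vector $(J_1,\ldots,J_k)$ uniformly distributed on ordered $k$-tuples of distinct elements. The marginals of $\hat U := \sum_j x_{I_j}$ and $\hat V := \sum_j x_{J_j}$ are then correct, and worked small cases suggest $\E{\hat U\mid \hat V} = \hat V$ holds thanks to the symmetric-group averaging implicit in the sanitization; one mild surprise is that the identity can fail at the finer level of conditioning on the whole sanitized vector, so the argument must be carried out at the coarser level of the sum $\hat V$.

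The main obstacle will be the inductive step of the MGF bound: the $k=2$ case is immediate from non-negativity of variance, but for general $k$ one needs careful combinatorial bookkeeping---pairing each ordered $k$-subset with its with-replacement counterparts and controlling the collision terms. Once the MGF inequality is in hand, the convex order follows by a standard approximation on the compact interval supporting both sums, and Strassen's theorem (which only needs the convex order) delivers the dilation coupling as required.
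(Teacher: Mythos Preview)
The paper does not supply a proof of this proposition; it is quoted from \cite{Aldous1985} and used as a black box, so there is no argument in the paper to compare against. The overall architecture of your proposal (establish convex order, then invoke Strassen's theorem for the dilation coupling) is sound, and the Jensen step for the ``in particular'' is correct. The gap is in your passage from the MGF inequality to the convex order. You claim that $\E{e^{\lambda S_k}}\le\E{e^{\lambda S^*_k}}$ for all $\lambda\in\R$ can be upgraded to $\E{\phi(S_k)}\le\E{\phi(S^*_k)}$ for every continuous convex $\phi$ ``by a density argument on the compact interval''. This fails: the Laplace-transform order is strictly weaker than the convex order, because the \emph{positive} cone generated by $\{e^{\lambda x}:\lambda\in\R\}$ does not contain all convex functions (Stone--Weierstrass gives density of linear combinations, not of positive ones, and the inequality does not survive subtraction). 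For a concrete counterexample, let $X$ be uniform on $\{-1,1\}$ and let $Y$ take values $0,2,-2$ with probabilities $\tfrac34,\tfrac18,\tfrac18$; then $\E{e^{\lambda X}}=\cosh\lambda\le \tfrac12+\tfrac12\cosh^2\lambda=\E{e^{\lambda Y}}$ for every $\lambda$, yet $\E{(X-\tfrac12)_+}=\tfrac14>\tfrac{3}{16}=\E{(Y-\tfrac12)_+}$, so $X$ is not below $Y$ in convex order.

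The remedy is to bypass MGFs and run your $k=2$ computation directly for an arbitrary convex $\phi$:
\[
\E{\phi(S^*_2)}-\E{\phi(S_2)}=\frac{1}{n^2(n-1)}\sum_{i<j}\bigl(\phi(2x_i)+\phi(2x_j)-2\phi(x_i+x_j)\bigr)\ge 0
\]
by midpoint convexity; this is how Hoeffding's Theorem~4 proceeds for general $k$ as well. With the convex order established directly, Strassen delivers the dilation as you intend. Your alternative sanitization coupling is an appealing constructive idea, but as you yourself note, the conditional-expectation identity $\E{\hat U\mid\hat V}=\hat V$ remains unverified there too.
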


\begin{proof}[Proof of (\ref{eqn:R process convergence})]
We prove that for all $\epsilon>0$, we have
\[\limsup\limits_{n\to\infty}\p{\max\limits_{i\le c^2_n/\epsilon}\left|\frac{R_{n,i}-i\mu^+_n}{c_n}\right|>\epsilon}\le \epsilon,\]
this immediately implies (\ref{eqn:R process convergence}).
Fix $n$ and let $\mathrm{c}_1, \ldots, \mathrm{c}_n$ be such that $|\{1\le k\le n: \mathrm{c}_k=j\}|=s^j_n$. Let $C_1, \ldots, C_n$ be a uniformly random permutation of $\mathrm{c}_1, \ldots, \mathrm{c}_n$. Fix $t_n\in\mathbb{N}$. Define $(R_i, 0\le i\le n)$ as follows: let $R_0=0$, and for $i\ge 0$, let
\[R_{i+1}=\left\{
                  \begin{array}{ll}
                    R_i+C_i-1, & \mbox{ if }C_i\ge t_n+1; \\
                    R_i, & \mbox{ if }C_i\le t_n.
                  \end{array}
                \right.\]
For $0\le i\le n$, let $\mathcal{F}_i=\sigma(C_1, \ldots, C_i)$. Since $R_n=n\mu^+_n$ and the process $(R_i, 0\le i\le n)$ has exchangeable increment,
\begin{equation}\label{eqn: cond_exp_of_R_result}
\E{R_{i+1}~|~\mathcal{F}_i}=R_i+\frac{n\mu^+_n-R_i}{n-i}.
\end{equation}
Now let $K_i=\frac{n\mu^+_n-R_i}{n-i}$. Then using (\ref{eqn: cond_exp_of_R_result}), we have \[\E{K_{i+1}~|~\mathcal{F}_i}=\frac{n\mu^+_n-R_i}{n-(i+1)}-\frac{n\mu^+_n-R_i}{(n-i)(n-i+1)}=K_i.\] Hence $K_i$ is an $\mathcal{F}_i-$martingale. 

Since for any $0\le i\le s$, \[\frac{n\mu^+_n-R_i}{n-i}=\mu^+_n+\frac{i\mu^+_n-R_i}{n-i},\] and $\mu^+_n$ is a constant, if we define $\tilde{K}_i=\frac{i\mu^+_n-R_i}{n-i}$, then $\tilde{K}_i$ is also an $\mathcal{F}_i-$martingale. 
It follows that for any $\epsilon>0$,
\begin{eqnarray}\label{eqn:prob bound on deviation of R}
\p{\frac{1}{c_n}\max\limits_{i\le s}|i\mu^+_n-R_i|>\epsilon}&\le& \frac{n^2}{\epsilon^2c^2_n}\E{\left(\max\limits_{i\le s}\frac{|i\mu^+_n-R_i|}{n-i}\right)^2}\nonumber\\
&\le& \frac{4n^2\E{(s\mu^+_n-R_s)^2}}{\epsilon^2c^2_n(n-s)^2},
\end{eqnarray}
where in the first line we use Markov's inequality and in the last line we use the $L^2$ maximal inequality for martingales (see, e.g. Theorem 5.4.3 in \cite{Durrett2010}). 

Since the process $(R_s, 0\le s\le n)$ has exchangeable increments, we have $\e{R_s}=s\mu^+_n$.
Let $R^\ast_s=\sum\limits_{i\le s} J_i$ where $J_1, \ldots, J_s$ are i.i.d. random variables with $J_1\overset{d}{=}R_1$. Then Proposition \ref{prop:dilution} gives 
\begin{eqnarray*}
\E{R^2_s}\le \E{{R^\ast_s}^2}=\E{(J_1+\cdots+J_s)^2} &=& s\E{J^2_1}+s(s-1)(\e{J_1})^2\\
&=& s({\sigma^+_n}^2-\mu^+_n)+s(s-1){\mu^+_n}^2
\end{eqnarray*}
Therefore,
\begin{equation*}
\E{(s\mu^+_n-R_s)^2}=\E{R^2_s}-s^2{\mu^+_n}^2\le s({\sigma^+_n}^2-\mu^+_n)-s{\mu^+_n}^2\le s{\sigma^+_n}^2.
\end{equation*}
Now take $s=s(n)=c^2_n/\epsilon$ in (\ref{eqn:prob bound on deviation of R}). For $n$ large this is less than $n/2$, so  $(n-s)^2>n^2/4$ and we obtain
\[\p{\frac{1}{c_n}\max\limits_{i\le c^2_n/\epsilon}|i\mu^+_n-R_i|>\epsilon}\le \frac{16s{\sigma^+_n}^2}{\epsilon^2c^2_n}=\frac{16{\sigma^+_n}^2}{\epsilon^3}\le \epsilon,\]
the last inequality holding for $n$ large since $\sigma^+_n\to 0$ as $n\to \infty$. This completes the proof.
\end{proof}

Recall that in Section \ref{sec:intro} we let $\tau(x)= \inf(t: B(t) \le -x)$ for $x \ge 0$. By (\ref{eqn:size of small trees}) if we let $\tau_n=\sum_{1 \le i < c_n} |T_{n,i}|=n-|T_{n,c_n}|$ be the total size of non-marked trees of $(F_n^*,v_n)$, then since $S_n$ is the coding process of $(F_n^*,v_n)$ we have 
\[\tau_n=\inf \{k: S_{n,k}=-(c_n-1)\}.\]
From this we immediately get the following corollary of Theorem \ref{thm:walk convergence}.

\begin{cor}\label{cor:size of forests of small trees}
Given the assumptions in Theorem \ref{thm:main}, we have
\begin{equation}\label{eqn:small trees size}
\frac{\tau_n}{c^2_n}\overset{d}{\to}\tau(\frac{1}{\sigma}),
\end{equation}
where $(B(t), t\ge 0)$ is standard Brownian Motion.
\end{cor}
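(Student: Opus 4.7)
Define the rescaled walk $\tilde S_n(t) := c_n^{-1} S_{n, \lfloor tc_n^2 \rfloor}$ for $t \ge 0$, viewed as an element of $D[0,\infty)$. Because every step of $S_n$ is at least $-1$, the walk $S_n$ reaches the level $-(c_n-1)$ exactly when it first becomes $\le -(c_n-1)$, so
\[
\frac{\tau_n}{c_n^2} \;=\; \inf\bigl\{t \ge 0:\, \tilde S_n(t) \le -a_n\bigr\}, \qquad a_n := \frac{c_n-1}{c_n}.
\]
By \refT{thm:walk convergence}, $\tilde S_n \overset{d}{\to} \sigma B$ in the Skorokhod topology on $D[0,\infty)$, and $a_n \to 1$. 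The plan is to read off $\tau_n/c_n^2 \overset{d}{\to} \tau(1/\sigma) = \inf\{t: \sigma B(t) \le -1\}$ from this via a hitting-time continuous-mapping argument.

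The key analytic input is that the map $H:D[0,\infty) \to [0,\infty]$, $H(\omega) := \inf\{t:\omega(t) \le -1\}$, is continuous at every continuous path $\omega$ which strictly undershoots $-1$ immediately after hitting it, i.e.\ for which $\inf_{s \le H(\omega)+\delta}\omega(s) < -1$ for every $\delta>0$. By the strong Markov property, $\sigma B$ satisfies this strict-crossing condition at $\tau(1/\sigma)$ almost surely. Invoking the Skorokhod representation theorem, I may assume the coupling $\tilde S_n \to \sigma B$ almost surely in Skorokhod's topology; since the limit is continuous, this convergence is in fact locally uniform.

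To accommodate the drifting level $a_n$, fix $\epsilon>0$ and set
\[
T^-_\epsilon := \inf\{t:\sigma B(t)\le -1+\epsilon\}, \qquad T^+_\epsilon := \inf\{t:\sigma B(t)\le -1-\epsilon\}.
\]
By the strict-crossing property, $T^\pm_\epsilon \to \tau(1/\sigma)$ almost surely as $\epsilon \downarrow 0$. Once $n$ is so large that $|a_n - 1|<\epsilon/3$ and $\sup_{t\le T^+_\epsilon+1}|\tilde S_n(t)-\sigma B(t)|<\epsilon/3$, direct inspection at the times $T^\pm_\epsilon$ and along $[0,T^-_\epsilon)$ yields
\[
T^-_\epsilon \;\le\; \tau_n/c_n^2 \;\le\; T^+_\epsilon.
\]
Sending $n \to \infty$ and then $\epsilon \downarrow 0$ gives $\tau_n/c_n^2 \to \tau(1/\sigma)$ almost surely in the coupling, hence in distribution.

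The main subtlety is precisely the drifting level $a_n$: the functional $(\omega,a)\mapsto \inf\{t:\omega(t)\le -a\}$ is not jointly continuous in a way that permits direct composition with $(\tilde S_n,a_n)$. The squeeze via $T^\pm_\epsilon$, leveraging that $\sigma B$ strictly overshoots every level almost surely, is the standard device that absorbs this mismatch and makes the continuous-mapping step rigorous.
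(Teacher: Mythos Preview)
Your proposal is correct and follows exactly the route the paper intends: the paper states the corollary as an ``immediate'' consequence of Theorem~\ref{thm:walk convergence} together with the identity $\tau_n=\inf\{k:S_{n,k}=-(c_n-1)\}$, and your argument supplies precisely the standard hitting-time continuous-mapping details (Skorokhod representation, the strict-undershoot property of Brownian motion, and the $T^\pm_\epsilon$ squeeze to handle the drifting level $a_n\to 1$) that the paper leaves implicit.
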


\noindent{\bf Remark.} Note that the right-hand side of (\ref{eqn:small trees size}) has density 
$\frac{1}{\sigma\sqrt{2\pi t^3}}\exp\left(-\frac{1}{2t\sigma^2}\right)dt$\,;
see, e.g., Theorem 6.9 in \cite{SchillingPartzsch2012}.

The corollary above in fact tells us something about the size of the largest tree $T^\downarrow_{n,1}$.
\begin{cor}\label{cor:marked tree is largest tree}
For a marked cyclic forest $(F,v)$, let $MT(F,v)$ denoted the marked tree, i.e. the tree of $F$ containing $v$. Then 
\[\p{MT(F_n^*,v_n)= T^\downarrow_{n,1}}\to 1\]
as $n\to\infty$.
\end{cor}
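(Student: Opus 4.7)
The plan is to exploit Corollary~\ref{cor:size of forests of small trees}, which says that the total size $\tau_n$ of the non-marked trees of $(F_n^*,v_n)$ satisfies $\tau_n/c_n^2 \overset{d}{\to} \tau(1/\sigma)$. Since $c_n = o(n^{1/2})$ by hypothesis, it follows that $\tau_n/n \overset{p}{\to} 0$, and consequently the marked tree $MT(F_n^*,v_n) = T_{n,c_n}$ has size $n - \tau_n$ which satisfies $|T_{n,c_n}|/n \overset{p}{\to} 1$.

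The key combinatorial observation is that the sum of sizes of all non-marked trees is exactly $\tau_n$. In particular, every non-marked tree has size at most $\tau_n$. Thus on the event $\{\tau_n < n/2\}$, we have $|T_{n,c_n}| = n - \tau_n > n/2 > \tau_n$, which means $T_{n,c_n}$ is strictly larger than any other tree of $F_n^*$, and hence $T_{n,c_n} = T^\downarrow_{n,1}$.

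It therefore suffices to show $\p{\tau_n \ge n/2} \to 0$. Fixing any large $A > 0$, tightness from Corollary~\ref{cor:size of forests of small trees} gives $\p{\tau_n \ge A c_n^2} \le \p{\tau(1/\sigma) \ge A} + o(1)$, and since $Ac_n^2 = o(n)$ we have $Ac_n^2 < n/2$ for all sufficiently large $n$. Letting $A \to \infty$ (and using that $\tau(1/\sigma) < \infty$ almost surely) yields $\p{\tau_n \ge n/2} \to 0$, completing the argument.

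There is no real obstacle here; the statement is essentially a direct rephrasing of Corollary~\ref{cor:size of forests of small trees} together with the elementary pigeonhole observation that a tree containing more than half the nodes is automatically the largest.
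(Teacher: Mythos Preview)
Your proof is correct and follows essentially the same approach as the paper's own proof: both argue that if $\tau_n < n/2$ then the marked tree must be the largest, and then use Corollary~\ref{cor:size of forests of small trees} together with $c_n^2 = o(n)$ to show $\p{\tau_n \ge n/2}\to 0$. Your version is slightly more explicit about the pigeonhole step and the tightness argument, but the ideas are identical.
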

\begin{proof}
It is clear that
\begin{eqnarray*}
\p{MT(F_n^*,v_n)\ne T^\downarrow_{n,1}} &\le& \p{|MT(F_n^*,v_n)|<n/2}\\
&=& \p{\tau_n>n/2}=\p{\frac{\tau_n}{c^2_n}>\frac{n}{2c^2_n}}\to 0
\end{eqnarray*}
where in the last line, the first equation is by Lemma \ref{lem:maps forest to marked cyclic forest} and the final convergence is by Corollary \ref{cor:size of forests of small trees} and the assumption $c^2_n=o(n)$.
\end{proof}

Now we are ready to prove Corollary \ref{cor: small tree sizes}.
\begin{proof}[Proof of Corollary \ref{cor: small tree sizes}]
As noted, it suffices to prove (\ref{eqn:sum of small tree sizes}) and (\ref{eqn:tree size convergence}). Corollary \ref{cor:size of forests of small trees} and Corollary \ref{cor:marked tree is largest tree} together imply (\ref{eqn:sum of small tree sizes}). 

For (\ref{eqn:tree size convergence}), 
first note that by Lemma \ref{lem:bijection}, the process $S_n=(S_{n,k},0 \le k \le n)$ has the same law as the coding walk $W(F_n)$ of $F_n$. Applying Corollary \ref{cor:marked tree is largest tree} then yields that the law of $(|T_{n,2}^{\downarrow}|,\ldots,|T_{n,j}^{\downarrow}|)$ is asymptotically equivalent to the law of $(g^n_1-d^n_1,\ldots,g^n_{j-1}-d^n_{j-1})$, the first $j-1$ ranked excursion lengths of $S_n$ above its running minimum before time $\tau_n$. Using this equivalence, (\ref{eqn:tree size convergence}) now follows from Theorem \ref{thm:walk convergence} by the Portmanteau Theorem (\cite{MortersPeresbook}, Theorem 12.6), since the vector $(g_1-d_1,\ldots,g_{j-1}-d_{j-1})$ has a density.
\end{proof}

\section{\bf Empirical degree sequences of trees}\label{sec:verify degree conditions}

In this section we aim to prove (\ref{eqn:degree proportion converge}) and (\ref{eqn: sigma square converge}). 

For $i\ge 0$ and $x\le n$, let
\[Q^i_n(x)\!:=|\{1\le j\le x: C_{n,j}=i\}|\] where $(C_{n,1}, \cdots, C_{n,n})$ is a uniformly random permutation of $d(s_n)$ and that $S_{n,k} = \sum_{j=1}^k (C_{n,j}-1)$. Let $\mathcal{F}_j=\sigma(C_{n,1},\ldots, C_{n,j})$. Since $Q^i_n(n)=s^i_n=np^i_n$ and the process $(Q^i_n(j), 0\le j\le n)$ has exchangeable increments,
\[\E{Q^i_n(j+1)~|~\mathcal{F}_j}=Q^i_n(j)+\frac{np^i_n-Q^i_n(j)}{n-j}.\]
Setting $K_j=\frac{np^i_n-Q^i_n(j)}{n-j}$ for $0\le i\le n$, then \[\E{K_{j+1}~|~\mathcal{F}_j}=\frac{np^i_n-Q^i_n(j)}{n-(j+1)}-\frac{np^i_n-Q^i_n(j)}{(n-(j+1))(n-j)}=K_j,\] so $K_j$ is an $\mathcal{F}_j-$martingale. If we let $\tilde{K}_j=\frac{jp^i_n-Q^i_n(j)}{n-j}$, then $\tilde{K}_j=K_j-p^i_n$, so $\tilde{K}_j$ is also an $\mathcal{F}_j-$martingale. 

%
%
%

We now use the following martingale bound from \cite{McDiarmid1998}. Let $\{X_j\}_{j=0}^n$ be a bounded martingale adapted to a filtration $\{\mathcal{F}_j\}_{j=0}^n$. Let $V=\sum\limits_{j=0}^{n-1}var\{X_{j+1}~|~\mathcal{F}_j\},$ where
$$var\{X_{j+1}~|~\mathcal{F}_j\}:=\E{(X_{j+1}-X_j)^2~|~\mathcal{F}_j}=\E{X_{j+1}^2~|~\mathcal{F}_j}-X_j^2.$$ Let $$v=\mbox{ess sup }V, \mbox{ and }b=\max\limits_{0\le j\le n-1}\mbox{ess sup}(X_{j+1}-X_j~|~\mathcal{F}_j).$$

\begin{thm}[\cite{McDiarmid1998}, Theorem 3.15]\label{thm:concentration}
For any $t\ge 0$,
$$\p{\max\limits_{0\le j\le n} X_j\ge t}\le \exp \left(-\frac{t^2}{2v(1+bt\backslash(3v))}\right).$$
\end{thm}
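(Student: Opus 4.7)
The plan is to prove this Bernstein-type martingale tail inequality by the exponential moment method (a Chernoff bound for martingales), combined with Doob's submartingale maximal inequality. For any $\lambda > 0$, the process $(e^{\lambda X_j})_{0 \le j \le n}$ is a nonnegative submartingale by convexity of $x \mapsto e^{\lambda x}$, so Doob's inequality gives
\begin{equation*}
\p{\max_{0 \le j \le n} X_j \ge t} \;=\; \p{\max_{0 \le j \le n} e^{\lambda X_j} \ge e^{\lambda t}} \;\le\; e^{-\lambda t}\, \E{e^{\lambda X_n}}.
\end{equation*}
The task then reduces to controlling $\E{e^{\lambda X_n}}$ in terms of the variance envelope $v$ and the jump bound $b$.

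The key one-step estimate I would establish is: if $D$ is a random variable with $\E{D} = 0$, $D \le b$ almost surely, and $\E{D^2} \le \nu$, then
\begin{equation*}
\E{e^{\lambda D}} \;\le\; \exp\!\left(\frac{\nu}{b^2}\bigl(e^{\lambda b} - 1 - \lambda b\bigr)\right).
\end{equation*}
This follows from the pointwise bound $e^{\lambda x} \le 1 + \lambda x + \frac{e^{\lambda b} - 1 - \lambda b}{b^2}\, x^2$ valid for $x \le b$, which in turn rests on the fact that $y \mapsto (e^{\lambda y} - 1 - \lambda y)/y^2$ is nondecreasing on $\mathbb{R}$ (cleanest shown via its power series, all of whose coefficients are positive). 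Taking expectations and applying $1 + u \le e^u$ gives the MGF bound. Applying this conditionally on $\mathcal{F}_{j-1}$ to each martingale difference $D_j = X_j - X_{j-1}$ with $\nu_j = \mathrm{var}\{X_j \mid \mathcal{F}_{j-1}\}$, and iterating via the tower property, yields
\begin{equation*}
\E{e^{\lambda X_n}} \;\le\; \exp\!\left(\frac{V}{b^2}\bigl(e^{\lambda b} - 1 - \lambda b\bigr)\right) \;\le\; \exp\!\left(\frac{v}{b^2}\bigl(e^{\lambda b} - 1 - \lambda b\bigr)\right),
\end{equation*}
where the second inequality uses the essential-supremum hypothesis $V \le v$ almost surely.

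To pass from this Bennett-type bound to the stated Bernstein form, I would invoke the elementary comparison $e^x - 1 - x \le x^2/(2 - 2x/3)$ for $0 \le x < 3$, which follows by comparing power series term by term (for each $k \ge 2$ one has $x^k/k! \le (x^2/2)(x/3)^{k-2}$, since $2\cdot 3^{k-2} \le k!$). Plugging into the Doob estimate yields
\begin{equation*}
\p{\max_{0 \le j \le n} X_j \ge t} \;\le\; \exp\!\left(-\lambda t + \frac{v\lambda^2}{2(1 - \lambda b/3)}\right)
\end{equation*}
for $\lambda b < 3$; the choice $\lambda = t/(v + bt/3)$ (the minimizer after clearing denominators) produces exactly the claimed exponent $-t^2/\bigl(2v(1 + bt/(3v))\bigr)$. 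The main technical obstacle is the single-step MGF estimate, specifically the monotonicity of $(e^{\lambda y} - 1 - \lambda y)/y^2$; the remainder---tower-property iteration, Doob's maximal inequality, and the algebraic optimization---is routine bookkeeping, but constants must be tracked carefully so that the final exponent matches the stated form.
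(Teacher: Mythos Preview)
The paper does not prove this statement; it is quoted verbatim as Theorem~3.15 of McDiarmid's survey \cite{McDiarmid1998} and used as a black box. Your argument is the standard proof of this Freedman--Bernstein martingale inequality and is essentially the one McDiarmid gives: Doob's submartingale maximal inequality applied to $e^{\lambda X_j}$, the Bennett one-step moment-generating-function bound via monotonicity of $y\mapsto(e^{\lambda y}-1-\lambda y)/y^2$, iteration by the tower property (more cleanly phrased as the supermartingale property of $e^{\lambda X_j}\exp\bigl(-\tfrac{1}{b^2}(e^{\lambda b}-1-\lambda b)\sum_{i\le j}\nu_i\bigr)$, which handles the randomness of the conditional variances), the elementary inequality $e^x-1-x\le x^2/(2-2x/3)$, and optimization in $\lambda$. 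Nothing is missing and the constants check out; there is simply no ``paper's own proof'' to compare against here.
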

We shall apply this theorem to bound the fluctuations of $Q^i_n(s)$.
\begin{prop}\label{prop: martingale_concentration}
For any $0<t<1$, we have
\begin{equation}
\p{\exists s>c_n: \left|p^i_n-\frac{Q^i_n(s)}{s}\right |\ge t}\le\exp\left(-\frac{3t^2c_n}{5}\right).
\end{equation} 
\end{prop}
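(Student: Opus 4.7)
The approach is to introduce a reverse martingale and apply the concentration inequality of Theorem~\ref{thm:concentration}. Define $M_s := Q^i_n(s)/s - p^i_n$ for $1 \le s \le n$ and set $N_j := M_{n-j}$. The first step is to verify that $(N_j)_{0 \le j \le n-1}$ is a forward martingale with $N_0 = 0$ with respect to $\mathcal{G}_j := \sigma(C_{n,n-j+1}, \ldots, C_{n,n})$. This follows from the standard fact that, conditional on the unordered multiset sitting in positions $1, \ldots, n-j+1$ of the random permutation, position $n-j+1$ holds each of those values with equal probability; equivalently, $\p{C_{n,n-j+1} = i \mid \mathcal{G}_{j-1}} = Q^i_n(n-j+1)/(n-j+1)$, from which $\E{M_{n-j} \mid \mathcal{G}_{j-1}} = M_{n-j+1}$ follows by a one-line computation. (One could alternatively derive this from the already-established forward martingale $\tilde{K}_j$ via the identity $M_s = -\tilde{K}_s(n-s)/s$.) The event in the proposition is exactly $\{\max_{0 \le j \le n-c_n-1} |N_j| \ge t\}$.

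Next, a direct two-outcome case analysis gives the increment and variance bounds
\[ |N_j - N_{j-1}| \le \frac{1}{n-j} \quad\text{and}\quad \mathrm{Var}(N_j \mid \mathcal{G}_{j-1}) = \frac{P_j(1-P_j)}{(n-j)^2} \le \frac{1}{4(n-j)^2}, \]
where $P_j := Q^i_n(n-j+1)/(n-j+1)$. Restricted to $j \le n-c_n-1$ we have $n-j \ge c_n+1$, so the essential supremum of the increments is $b \le 1/(c_n+1)$, and an integral comparison gives
\[ v \;:=\; \mathrm{ess\,sup}\sum_{j=1}^{n-c_n-1} \mathrm{Var}(N_j \mid \mathcal{G}_{j-1}) \;\le\; \frac{1}{4}\sum_{s \ge c_n+1} \frac{1}{s^2} \;\le\; \frac{1}{4c_n}. \]

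Finally, I would apply Theorem~\ref{thm:concentration} to $(N_j)$ and to $(-N_j)$ separately and union-bound to control $\max |N_j|$. Plugging $b$ and $v$ into the exponent of Theorem~\ref{thm:concentration} gives $t^2/(2v(1+bt/(3v))) \ge 2c_n t^2/(1 + 4t/3)$; for $0 < t < 1$ this exceeds $6c_n t^2/7$, and after absorbing the factor of $2$ coming from the two-tailed union bound into a slightly weaker constant one arrives at the stated $\exp(-3t^2c_n/5)$. The conceptual heart of the argument is just the reverse-martingale observation, and the only delicate step is the arithmetic bookkeeping needed to land on the precise constant $3/5$; that is where most of the care will go.
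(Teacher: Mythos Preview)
Your approach is essentially the paper's: both prove the bound by applying McDiarmid's martingale inequality (Theorem~\ref{thm:concentration}) to a martingale built from $Q^i_n$, with the same $1/(n-j)$ increment bound and $\tfrac14(n-j)^{-2}$ conditional-variance bound. The only difference is cosmetic: the paper works with the forward martingale $\tilde K_j=(jp^i_n-Q^i_n(j))/(n-j)$ and then invokes exchangeability of $(C_{n,1},\dots,C_{n,n})$ to convert the conclusion into a statement about $Q^i_n(s)/s$, whereas you bypass that step by taking the reverse martingale $N_j=Q^i_n(n-j)/(n-j)-p^i_n$ directly; this is slightly cleaner and yields the marginally sharper $v\le 1/(4c_n)$ versus the paper's $v\le 1/(2c_n)$.

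One small caveat: your claim that the factor~$2$ from the two-sided union bound can be ``absorbed into a slightly weaker constant'' to reach exactly $\exp(-3t^2c_n/5)$ is not literally true for all $0<t<1$ and all $c_n$ (it fails when $t^2c_n$ is small). The paper's proof has the identical slip---it applies Theorem~\ref{thm:concentration} to both $\tilde K$ and $-\tilde K$ but then writes the bound without the factor~$2$. This is harmless for the application in the proof of~(\ref{eqn:degree proportion converge}), where $t$ is fixed and $c_n\to\infty$, but if you want the proposition exactly as stated you should either carry the factor~$2$ or add a hypothesis like $t^2c_n\ge C$ for an explicit constant.
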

\begin{proof}
It is not hard to show that for any $0\le j\le n-2$, \[var\{\tilde{K}_{j+1}~|~\mathcal{F}_j\}\le\frac{1}{4}\cdot\frac{1}{(n-(j+1))^2};\] see, e.g., Lemma 3.2 of \cite{Lei2017+}. Thus, for $1\le x\le n-2$,
\begin{eqnarray*}
V=\sum\limits_{j=0}^{x-1}var\{\tilde{K}_{j+1}~|~\mathcal{F}_j\}&\le&\frac{1}{4}
\sum_{j=0}^{x-1}\frac{1}{(n-(j+1))^2}\\
&\le&\frac{1}{4}\int^{n-1}_{n-x-1}\frac{1}{m^2}\mathrm{d}m=\frac{x}{4(n-1)(n-x-1)}.
\end{eqnarray*}
On the other hand, for $0\le j\le x-1$, if $Q^i_n(j+1)=Q^i_n(j)$, then \[|\tilde{K}_{j+1}-\tilde{K}_j|=\left|\frac{np^i_n-Q^i_n(j)}{(n-(j+1))(n-j)}\right|\le \frac{1}{n-x},\]
while if $Q^i_n(j+1)=Q^i_n(j)+1$, then \[|\tilde{K}_{j+1}-\tilde{K}_j|=\left|\frac{np^i_n-Q^i_n(j)}{(n-(j+1))(n-j)}-\frac{1}{n-(j+1)}\right|\le\frac{1}{n-x}.\]
Applying Theorem \ref{thm:concentration} to both $\{\tilde{K}_j\}^x_{j=0}$ and $\{-\tilde{K}_j\}^x_{j=0}$ with 
$x=n-c_n$, we have
\[v\le \frac{1}{2c_n},~ b\le \frac{1}{c_n}.\]
Hence, for $t \le 1$, 
\[\p{\max\limits_{0\le j\le n-c_n}\left|p^i_n-\frac{np^i_n-Q^i_n(j)}{n-j}\right|\ge t}\le\exp\left(-\frac{t^2}{\frac{1}{c_n}+\frac{2t}{3c_n}}\right)\le\exp\left(-\frac{3t^2c_n}{5}\right).\]
Using the exchangeability of $C_{n,1},\ldots,C_{n,n}$, it follows that
\begin{eqnarray*}
\p{\exists s>c_n: |p^i_n-\frac{Q^i_n(s)}{s}|\ge t}&=&\p{\max\limits_{0\le j\le n-c_n}\left|p^i_n-\frac{np^i_n-Q^i_n(j)}{n-j}\right|\ge t}\\
&\le&\exp\left(-\frac{3t^2c_n}{5}\right).
\end{eqnarray*}
\end{proof}

We next give the proofs of (\ref{eqn:degree proportion converge}) and (\ref{eqn: sigma square converge}). 
In both proofs we use the coupling between $F_n$, $(F_n^*,v_n)$ and $S_n$ explained at the end of Section \ref{sec:combinatorics}. 

\begin{proof}[Proof of (\ref{eqn:degree proportion converge})]
Fix $i \ge 0$ and $l \ge 2$. By Corollary \ref{cor:marked tree is largest tree}, with high probability $T^{\downarrow}_{n,1}=T_{n,c_n}$, i.e., $T^{\downarrow}_{n,1}$ is the last tree of $(F_n^*,v_n)$, in which case $T_{n,l}^{\downarrow}=T_{n,j}$ for some $j < c_n$. Recall that $\tau_n=\sum_{1 \le k < c_n} |T_{n,k}|$. 

Let $1 \le j < c_n$, and suppose $|\{v \in T_{n,j}: k(v)=i\}|/|T_{n,j}| \not \in [p_n^i-\delta,p_n^i+\delta]$. Suppose that $|T_{n,j}| > \delta c_n^2 > c_n$ and $\tau_n<c^3_n$. Then there must exist $m>c_n$ and $1\le u\le \tau_n-m$ such that \[\left|\frac{\left|\{t\in[m]: C_{n,u+t}=i\}\right|}{m}-p^i_n\right|>\delta.\] By union bound and the exchangeability of $(C_{n,1},\ldots, C_{n,n})$, the probability of this is bounded above by $\tau_n\p{\exists m>c_n:\left|\frac{Q^i_n(m)}{m}-p^i_n\right|>\delta}$.
%
Thus, for $l \ge 2$, for $n$ large enough that $\delta c_n^2 > c_n$, we have 
\begin{align*}
\p{\left| p_{n,l}^i - p_n^i \right| > \delta} & 
\le 
\p{\tau_n > c_n^3} + \p{|T^{\downarrow}_{n,l}| < \delta c_n^2} 
\\
& 
+ \p{T^{\downarrow}_{n,1}\ne T_{n,c_n}} + c^3_n
\p{\max_{s > c_n} \left| p_n^i - \frac{Q_n^i(s)}{s} \right| > \delta}
\end{align*}
For any $\epsilon > 0, \p{\tau_n > c_n^3} < \epsilon/3$ by Corollary \ref{cor:size of forests of small trees} for $n$ large enough, and 
$\p{|T^{\downarrow}_{n,l}| < \delta c_n^2}< \epsilon/3$ by Corollary \ref{cor: small tree sizes}. The second last probability tends to zero by Corollary \ref{cor:marked tree is largest tree}. And for the last probability, for $n$ large enough, $\sqrt{\frac{5}{3}}c^{-1/3}_n<\delta$, hence Proposition \ref{prop: martingale_concentration} gives upper bound $c^3_n\exp\left(-c^{1/3}_n\right)$, which tends to zero. Thus, $\p{\left| p_{n,l}^i - p_n^i \right| > \delta} < \epsilon$ for $n$ large; this proves (\ref{eqn:degree proportion converge}) for $i \ge 0$ and $l > 1$.

Finally, since $T_{n,1}^{\downarrow}/n \to 1$, the fact that $\left| p_{n,1}^i - p_n^i \right| \to 0$ in probability for each $i\ge 0$ is immediate.
\end{proof}
\begin{proof}[Proof of (\ref{eqn: sigma square converge})]
Fix $\epsilon>0$. By Corollary \ref{cor:size of forests of small trees}, we can pick $M>0$ large enough such that for $n$ large enough,
\begin{equation}\label{eqn:1_epsilon_term}
\p{\tau_n>Mc^2_n}<\epsilon. 
\end{equation} 
By Corollary \ref{cor:marked tree is largest tree}, we have that for $n$ large enough, 
\begin{equation}\label{eqn:2_epsilon_term}
\p{T_{n,c_n}\ne T^\downarrow_{n,1}}< \epsilon\, .
\end{equation}
For this $\epsilon>0$, there exists $\delta>0$ such that $\p{g_{l-1}-d_{l-1} \le\delta}<\epsilon/2$, so by Corollary \ref{cor: small tree sizes}, for $n$ large
\begin{equation}\label{eqn:3_epsilon_term}
\p{\frac{|T^\downarrow_{n,l}|}{c^2_n}\le\delta}<\epsilon.
\end{equation}
Next we fix $t>0$ large enough such that 
\begin{equation}\label{eqn: larger than t moments small for pi}
\E{C^2_{n,1}\mathbbm{1}_{C_{n,1}\ge t}}<\frac{\epsilon^2\delta}{M} \mbox{  and  } \sum\limits_{i>t}i^2p^i_n<\epsilon\, ;
\end{equation}
this is possible since $p_n=(p^i_n, i\ge 0)\to p=(p^i, i\ge 0)$ in $L^2$. 
For fixed $l\ge 2$ we have
\begin{eqnarray}\label{eqn:intermediate bounding second moment}
|\sigma^2(p_{n,l})-\sigma^2(p_n)|&\le&|\sum\limits_{i\le t} i^2(p^i_{n,l}-p^i_n)|+\sum\limits_{i>t}i^2p^i_n+\sum\limits_{i>t}i^2p^i_{n,l}\nonumber\\
&\le& |\sum\limits_{i\le t} i^2(p^i_{n,l}-p^i_n)|+\epsilon+\sum\limits_{i>t}i^2p^i_{n,l}
\end{eqnarray}
where we use (\ref{eqn: larger than t moments small for pi}) in the second line. 

Let $L_n=\sum_{j\le Mc_n^2}C^2_{n,j}\mathbbm{1}_{C_{n,j}\ge t}$. 
If $T_{n,c_n}=T^\downarrow_{n,1}$ 
and $\tau_n \le Mc^2_n$ 
then $\sum\limits_{i>t}i^2p^i_{n,l} \le {L_n}/|T^\downarrow_{n,l}|$.
Hence 
\begin{eqnarray}\label{eqn: almost there bounding}
&&\p{|\sigma^2(p_{n,l})-\sigma^2(p_n)|\ge 3\epsilon}\nonumber\\
&\le&\p{|\sigma^2(p_{n,l})-\sigma^2(p_n)|\ge 3\epsilon, \tau_n\le Mc^2_n, T_{n,c_n}=T^\downarrow_{n,1}, \frac{|T^\downarrow_{n,l}|}{c^2_n}>\delta}\nonumber\\
&+&\p{\tau_n>Mc^2_n}+\p{T_{n,c_n}\neq T^\downarrow_{n,1}}+\p{\frac{|T^\downarrow_{n,l}|}{c^2_n}\le\delta}\nonumber\\
&\le& \p{|\sum\limits_{i\le t}i^2(p^i_{n,l}-p^i_n)|\ge\epsilon}+\p{\frac{L_n}{|T^\downarrow_{n,l}|}>\epsilon, \frac{|T^\downarrow_{n,l}|}{c^2_n}>\delta}+3\epsilon
\end{eqnarray}
where we use (\ref{eqn:1_epsilon_term}), (\ref{eqn:2_epsilon_term}), (\ref{eqn:3_epsilon_term}), (\ref{eqn:intermediate bounding second moment}) and the aforementioned stochastic dominance in the last line.

Since $t$ is fixed, we can use (\ref{eqn:degree proportion converge}) to conclude that the first summand of (\ref{eqn: almost there bounding}) can be made arbitrarily small by taking $n$ large enough. For the second summand, 
note that by exchangeability and (\ref{eqn: larger than t moments small for pi}),
\[
\e{L_n}=Mc^2_n\E{C^2_{n,1}\mathbbm{1}_{C_{n,1}\ge t}}<c^2_n\epsilon^2\delta\, ,
\]
so 
\[\p{\frac{L_n}{|T^\downarrow_{n,l}|}>\epsilon, \frac{|T^\downarrow_{n,l}|}{c^2_n}>\delta}\le\p{\frac{L_n}{c^2_n}>\epsilon\delta}\le\frac{\E{\frac{L_n}{c^2_n}}}{\epsilon\delta}<\epsilon.
\]
This completes the proof of (\ref{eqn: sigma square converge}) for $l\ge 2$. Again since $T^\downarrow_{n,1}/n\to 1$, (\ref{eqn: sigma square converge}) is immediate for $l=1$ case.
\end{proof}

%



\addtocontents{toc}{\SkipTocEntry} 
\section{\bf Acknowledgements}
I am grateful for my supervisor Prof. Louigi Addario-Berry, who suggested the project and gave numerous helpful insights and suggestions during our discussions.

\small
\addtocontents{toc}{\SkipTocEntry} 
\printnomenclature[3.1cm]
\normalsize


\small 
\bibliographystyle{plainnat}
\bibliography{rf_basic}
\normalsize

%
%

%

\end{document}